 \numberwithin{equation}{section}
\theoremstyle{plain}
\newtheorem{thm}{Theorem}[section]
\newtheorem{cor}[thm]{Corollary}
\newtheorem{lem}[thm]{Lemma} 
\newtheorem{prop}[thm]{Proposition}
\theoremstyle{definition}
\newtheorem{defn}[thm]{Definition}
\theoremstyle{remark}
\newtheorem{rem}[thm]{Remark}
\newcommand{\N}{\mathbb{N}}
\newcommand{\R}{\mathbb{R}}
\newcommand{\E}{\mathbb{E}}
\newcommand{\I}{\infty}
\newcommand{\tr}{\text{tr}}
\newcommand{\diag}{\text{diag}}
\newcommand{\bp}{\begin{proof}[\ensuremath{\mathbf{Proof}}]}
\newcommand{\bs}{\begin{proof}[\ensuremath{\mathbf{Solution}}]}
\newcommand{\ep}{\end{proof}}
\begin{document}

%Title

\title{Option pricing in the large risk aversion, small transaction cost limit }

\author{Ryan Hynd\footnote{Department of Mathematics, University of Pennsylvania.  Partially supported by NSF grants DMS-1004733 and DMS-1301628.}}

\maketitle 
%  Abstract  
\begin{abstract}
We characterize the price of a European option on several assets for a very risk averse seller, in a market with small transaction costs as a solution of a nonlinear diffusion equation.  This problem turns out to be one of asymptotic analysis of parabolic PDE, and the interesting feature is the role of a nonlinear PDE eigenvalue problem.  In particular, we generalize previous work of Guy Barles and H. Mete Soner who studied this problem for a European option on a single asset. 

\end{abstract}

%
%\noindent
%{\bf Keywords:} Singular perturbation, free boundary problem, stochastic singular control \\
%\noindent
%{\bf MSC 2010:} 35K55, 35R35, 93E20

%%%%%%%%%%%%%%%%%%%%%%%%%%%%%%%%%%%%%%%%%%%%%%%%%%%%%%%
\section{Introduction}
In a seminal paper \cite{DPZ}, Davis, Panas, and Zariphopoulou presented a model for pricing European options in the presence of transaction costs.  Within this model, Barles and Soner discovered that in markets with small, proportional transaction costs $\approx \sqrt{\epsilon}$, the asking price of a European option on a single asset by a very risk averse $\approx \frac{1}{\epsilon}$ seller is approximately characterized as a solution of a nonlinear Black-Scholes type equation \cite{BS}.  In this work, we extend the result of Barles and Soner and characterize the large risk aversion, small transaction price of European options on several assets. 
\par  In particular, we will show that the asking price of a European option with payoff function $g$, for a very risk averse seller is approximately given as a solution of the PDE
\begin{equation}\label{zEq}
\max_{1\le i\le n}\left\{-z_t -\frac{1}{2}\tr\left(d(p)\sigma\sigma^t d(p)\left(D^2_p z + \frac{1}{\epsilon}(D_p z-y)\otimes (D_p z-y) \right)\right), |z_{y_i}|-\sqrt{\epsilon} p_i\right\}=0,
\end{equation}
where $(t,p,y)\in (0,T)\times (0,\infty)^n\times\R^n$, that satisfies the terminal condition
\begin{equation}\label{TermCondZ}
z(T,p,y)=g(p).
\end{equation}
Here $(D_p z-y)\otimes (D_p z-y)$ is the $n\times n$ matrix with $i,j$th entry $(z_{p_i}-y_i)(z_{p_j}-y_j)$, $d(p):=\diag(p_1,p_2,\dots,p_n)$, $\epsilon $ and $T$ are positive numbers and $\sigma$ is a nonsingular $n\times n$ matrix.  Our goal is to understand the behavior of solutions when $\epsilon$ tends to $0$.
\par  In analogy with the work of Barles and Soner \cite{BS}, we shall see that 
\begin{equation}\label{SevVarZasym}
z^\epsilon(t,p,y)\approx \psi(t,p)+\epsilon u\left(d(p)\frac{D\psi(t,p)-y}{\sqrt{\epsilon}}\right),
\end{equation}
as $\epsilon$ tends to $0$, where $\psi$ is a solution of the {\it nonlinear Black-Scholes equation}
% non-linear black-scholes
\begin{equation}\label{NonBSpsi}
\begin{cases}
 \psi_t+\lambda\left(d(p)D^2\psi d(p)\right)=0, \quad (t,p)\in (0,T)\times (0,\infty)^n\\
 \hspace{1.4in} \psi = g, \quad (t,p)\in \{T\}\times (0,\infty)^n
 \end{cases}.
 \end{equation}
Note that we have studied this option pricing problem in the case of zero interest rates; see Remark \ref{CaseRpos} for the analog of \eqref{NonBSpsi} with a positive interest rate parameter.

\par We will also see that the nonlinearity $\lambda$ and the function $u$ arising in the error term for $z^\epsilon$ together satisfy the following PDE eigenvalue problem: 
for each $A\in {\mathcal S}(n)$,  find a unique $\lambda=\lambda(A)\in \R$ and a function $x\mapsto u=u(x; A)$ satisfying
\begin{equation}\label{CorrectorPDE}
\max_{1\le i\le n}\left\{\lambda+G(D^2u,Du,x;A), |u_{x_i}|-1\right\}=0, \quad x\in \R^n.
\end{equation}
Here
$$
G(X,p,x;A):=- \frac{1}{2}\tr\sigma\sigma^t\left(A + AXA + \left(x+Ap\right)\otimes\left(x+Ap\right) \right)
$$
for $(X,p,x)\in {\cal S}(n)\times\R^n\times\R^n$, and  ${\cal S}(n)$ denotes the set of real symmetric, $n\times n$ matrices.  The main novelty of this work is our treatment of the eigenvalue problem.  Barles and Soner observed that when $n=1$ equation \eqref{NonBSpsi} reduces to an ODE free boundary problem which has a near explicit solution. This is far from the case in the several asset setting $(n\ge 2)$. Nevertheless we can use PDE techniques to solve this problem. 
% first theorem
\begin{thm}\label{firstmainthm} For each $A\in {\mathcal S}(n)$, there is a unique $\lambda=\lambda(A)\in \R$ such that \eqref{CorrectorPDE} has a viscosity solution 
$u\in C(\R^n)$ satisfying 
\begin{equation}\label{Newugrowth}
\lim_{|x|\rightarrow \infty}\frac{u(x)}{\sum^n_{i=1}|x_i|}=1.
\end{equation}
Moreover, associated to $\lambda(A)$ is a convex solution $u$ satisfying \eqref{Newugrowth}; when $\det A\neq 0$, $u\in C^{1,\alpha}_\text{loc}(\R^n)$ for each $0<\alpha<1$.
\end{thm}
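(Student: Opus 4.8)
The plan is to obtain the eigenvalue $\lambda(A)$ and the corrector $u$ via a vanishing-discount (ergodic approximation) argument, exploiting the structure that \eqref{CorrectorPDE} is really a Hamilton--Jacobi--Bellman equation with a gradient constraint. First I would fix $A\in\mathcal{S}(n)$ and, for $\delta>0$, consider the discounted problem
\begin{equation*}
\max_{1\le i\le n}\left\{\delta u^\delta + \lambda_0 - \tfrac{1}{2}\tr\,\sigma\sigma^t\!\left(A + AD^2u^\delta A + (x+ADu^\delta)\otimes(x+ADu^\delta)\right),\ |u^\delta_{x_i}|-1\right\}=0
\end{equation*}
for a suitable fixed normalizing constant; the point of the gradient constraint $|u_{x_i}|\le1$ is that it forces $|Du^\delta|\le\sqrt n$ everywhere, so the solutions $u^\delta$ are uniformly Lipschitz. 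I would first establish existence and uniqueness of a bounded (or at least linearly-growing) viscosity solution $u^\delta$ of the discounted problem by Perron's method, using the obstacle-type structure and comparison; then show $\delta\,u^\delta$ is uniformly bounded (the quadratic term is controlled because $|Du^\delta|$ is bounded and $\tr\,\sigma\sigma^t A$ is a constant, so the Hamiltonian is bounded), and that $u^\delta(x)-u^\delta(0)$ has a uniform modulus of continuity — indeed uniform Lipschitz bound. Along a subsequence $\delta_k\to0$, $-\delta_k u^{\delta_k}(0)\to\lambda(A)$ and $u^{\delta_k}(\cdot)-u^{\delta_k}(0)\to u$ locally uniformly, and stability of viscosity solutions gives that $(\lambda(A),u)$ solves \eqref{CorrectorPDE}.

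The growth normalization \eqref{Newugrowth} is where the gradient constraint must be used more carefully. The constraint region $\{|u_{x_i}|=1\}$ behaves like an eikonal equation, and away from a compact set the PDE should be governed by the constraint, forcing $u(x)\sim\sum_i|x_i|$ at infinity. Concretely I would build explicit barriers: the function $\sum_i|x_i| + C$ (suitably mollified near the coordinate hyperplanes) is a supersolution of \eqref{CorrectorPDE} for $C$ large and $\lambda$ in the right range, and $\sum_i|x_i| - C$ a subsolution, by checking that the constraint term is nonpositive and the interior term is dominated once $|x|$ is large (here the sign of the quadratic term $(x+ADu)\otimes(x+ADu)$, which is positive semidefinite and hence enters the $\tr\,\sigma\sigma^t(\cdots)$ with a definite sign since $\sigma\sigma^t>0$, is crucial). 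Comparison with these barriers pins down \eqref{Newugrowth} and simultaneously pins down $\lambda(A)$: if $\lambda$ were too large the subsolution barrier would be violated, if too small the supersolution barrier would be, which is exactly the mechanism giving \emph{uniqueness} of $\lambda(A)$. For uniqueness of $\lambda$ I would argue by contradiction — two admissible values $\lambda_1<\lambda_2$ with solutions $u_1,u_2$ of comparable growth — and run a comparison argument on $u_1-u_2$ after the usual doubling-of-variables, using the $\delta$-perturbation trick to handle the lack of strict monotonicity in $u$.

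For the convexity statement and the regularity when $\det A\neq0$, I would return to the ergodic approximation but now choose the approximants to be convex: solve the discounted problem in the class of convex functions (convexity is preserved by the relevant sup-operations and passes to the locally uniform limit), yielding a convex $u$ attached to the same $\lambda(A)$. Once $u$ is convex, $D^2u\ge0$ in the sense of distributions, and inside the non-contact region $\{|u_{x_i}|<1\ \forall i\}$ the equation reads $\lambda = \tfrac12\tr\,\sigma\sigma^t(A+AD^2uA+(x+ADu)\otimes(x+ADu))$, which — when $A$ is invertible — is a uniformly elliptic equation for $u$ (the coefficient of $D^2u$ is $A\sigma\sigma^t A>0$), so $u\in C^{1,\alpha}_{\mathrm{loc}}$ there by De Giorgi--Nash--Moser / Caffarelli-type estimates; in the contact region $u$ inherits $C^{1,\alpha}$ regularity from the obstacle-problem structure (the solution detaches from the eikonal obstacle $C^{1,1/2}$-smoothly, or more simply, a convex Lipschitz function that solves a gradient-constrained problem is $C^1$ across the free boundary). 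The main obstacle I anticipate is not any single one of these steps but the interplay in the last one: establishing $C^{1,\alpha}$ up to and across the free boundary between contact and non-contact regions when $\det A\neq0$, since the equation degenerates in type there; I would handle this by the standard device of penalization — replacing $|u_{x_i}|-1$ by a smooth penalty $\beta_\eta(|u_{x_i}|^2-1)$ — deriving $\eta$-uniform interior $C^{1,\alpha}$ bounds from the Bernstein method (differentiating the equation and using the ellipticity from $A\sigma\sigma^t A$), and passing to the limit.
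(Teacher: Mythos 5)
Your overall strategy tracks the paper's quite closely: vanishing discount approximation, explicit barriers of $\sum_i|x_i|$-type to pin down both growth and the eigenvalue bounds, a comparison/doubling-of-variables argument for uniqueness of $\lambda(A)$, and penalization of the gradient constraint plus interior estimates for $C^{1,\alpha}$ regularity when $\det A\neq 0$. That said, there are two points where the proposal has gaps.

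The serious one is convexity. You write that you would ``solve the discounted problem in the class of convex functions (convexity is preserved by the relevant sup-operations and passes to the locally uniform limit).'' This is not a mechanism. Perron's method takes a supremum over \emph{all} subsolutions, not over convex ones, and there is no reason the bump-function modifications used there should preserve convexity; you cannot simply decree that the solution is convex. The paper earns convexity of the discounted solutions $u_\delta$ by a genuine Korevaar-type argument: one studies $\mathcal{C}^\tau(x,y)=\tau u\!\left(\tfrac{x+y}{2}\right)-\tfrac{u(x)+u(y)}{2}$, shows it attains an interior maximum thanks to the linear growth, exploits that the first-order optimality conditions force $|u_{x_i}|\le\tau<1$ at that maximum (so the interior PDE is active at all three relevant points), and then concludes $\mathcal{C}^\tau\le O(1-\tau)$ from the structure of the equation before sending $\tau\to1^-$. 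The rigorous version uses the same doubling-of-variables machinery as the eigenvalue comparison. Without something like this, your convexity claim — which then feeds into the boundedness of the non-contact region, the linear growth from below, the $C^{1,\alpha}$ regularity, and the min--max formulae — is unsupported.

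A smaller issue: your argument that $\delta u^\delta$ is uniformly bounded ``because $|Du^\delta|$ is bounded and $\tr\,\sigma\sigma^t A$ is a constant, so the Hamiltonian is bounded'' is not correct. The Hamiltonian contains the term $\tfrac12\tr\,\sigma\sigma^t(x+ADu)\otimes(x+ADu)=\tfrac12|\sigma^t(x+ADu)|^2$, which grows like $|x|^2$; the Hamiltonian is manifestly unbounded in $x$. The correct route (which the paper takes, and which you in fact use elsewhere in your proposal for the growth normalization) is to compare $u_\delta$ against explicit sub- and supersolutions of the form $\bigl(\sum_i|x_i|-K\bigr)^+ + \tfrac{\tr\sigma\sigma^tA}{2\delta}$ and $\tfrac{K}{\delta}+\sum_i(\text{regularized }|x_i|)$, which at $x=0$ give $\tfrac12\tr\sigma\sigma^tA\le\delta u_\delta(0)\le K$ directly. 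So the boundedness of $\lambda_\delta=\delta u_\delta(0)$ is a barrier statement, not a boundedness-of-$H$ statement. (Also note the sign: in the paper $\lambda_\delta=+\delta u_\delta(0)\to\lambda(A)$, not $-\delta u_\delta(0)$; your normalizing constant $\lambda_0$ would have to absorb this, which you do not explain.)

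The rest — the barrier construction for \eqref{Newugrowth}, the $\tau$-scaling comparison argument for uniqueness of $\lambda$, and the penalized/ Bernstein approach to $C^{1,\alpha}$ regularity in the invertible case (the paper penalizes $H(Du)=\max_i|u_{x_i}|-1$ with a smooth convex $\beta_\epsilon$ and cites a quantitative $C^{1,\alpha}$ estimate from prior work) — are in the right spirit and essentially what the paper does.
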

It follows from Theorem \ref{firstmainthm} that the eigenvalue problem associated to the PDE \eqref{CorrectorPDE} has a well defined solution $\lambda: {\mathcal S}(n)\rightarrow \R$.  In order to properly interpret solutions of \eqref{NonBSpsi}, we will need to know that $\lambda$ is a continuous, nondecreasing function with respect to the partial ordering on ${\cal S}(n)$. In fact, we establish this and several other properties in Theorem \ref{lampropthm}.  Then we establish the following theorem, which is the main result of this paper. 
% Main Theorem
% JUSTIFY
\begin{thm}\label{lastmainthm} Assume that $g\in C((0,\infty)^n)$. Then for each $\epsilon,T>0$ there is a viscosity solution $z^\epsilon\in C((0,T]\times (0,\infty)^n\times\R^n)$  of \eqref{zEq} that satisfies \eqref{TermCondZ}.
Further suppose there is a constant $L$ for which  
\begin{equation}\label{gEstimate}
0\le g(p)\le L
\end{equation}
or 
\begin{equation}\label{gEstimate2}
0\le g(p)\le L\sum^n_{i=1}p_i\quad \text{and}\quad \lim_{|p|\rightarrow \infty}\frac{g(p)}{\sum^n_{i=1}p_i}=L.
\end{equation}
Then, as $\epsilon$ tends to $0$, $z^\epsilon$ converges uniformly on compact subsets of $(0,T) \times (0,\infty)^n\times\R^n$ to a viscosity solution of equation \eqref{NonBSpsi}.
\end{thm}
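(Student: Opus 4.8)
The plan is the two-step scheme standard for singular perturbation problems of this type: first construct the solutions $z^{\epsilon}$ together with $\epsilon$-independent bounds by Perron's method, and then pass to the limit $\epsilon\to0$ by the \emph{perturbed test function method} of Evans, with the correctors from Theorem \ref{firstmainthm} playing the role of cell-problem solutions and the eigenvalue function $\lambda$ of Theorem \ref{lampropthm} as the effective nonlinearity; the ansatz \eqref{SevVarZasym} is the exact shape of the perturbed test function. For fixed $\epsilon,T>0$, existence of a viscosity solution $z^{\epsilon}\in C((0,T]\times(0,\infty)^n\times\R^n)$ of \eqref{zEq}--\eqref{TermCondZ} follows from Perron's method, given a comparison principle for \eqref{zEq} and suitable barriers adapted to $g$; under the growth hypotheses \eqref{gEstimate} or \eqref{gEstimate2} these barriers can be chosen uniformly in $\epsilon$ --- below, the constant $0$, which is a subsolution of \eqref{zEq} (both entries of the maximum being then nonpositive) and lies under $g$; above, a majorant of $g$ augmented by a term controlling the $y$-variable that reflects the linear growth \eqref{Newugrowth} of the correctors. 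Consequently $\{z^{\epsilon}\}$ is locally uniformly bounded, so its upper and lower half-relaxed limits $\overline z=\limsup^{*}z^{\epsilon}$ and $\underline z=\liminf_{*}z^{\epsilon}$ (as $\epsilon\to0$) are finite and satisfy $\underline z\le\overline z$ everywhere.

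The heart of the argument is that $\overline z$ is a viscosity subsolution, and $\underline z$ a viscosity supersolution, of \eqref{NonBSpsi}; I describe the subsolution case, the other being symmetric. Let $\phi\in C^{2}$ touch $\overline z$ from above at an interior point $(t_0,p_0,y_0)$, which after the usual reductions may be taken to be a strict global maximum. Set $A:=e^{r(T-t_0)}d(p_0)D^{2}\phi(t_0,p_0)d(p_0)\in\mathcal{S}(n)$, the matrix to which $\lambda$ is applied in \eqref{NonBSpsi}, and let $\lambda(A)$ and the convex corrector $u(\,\cdot\,;A)$ be as in Theorem \ref{firstmainthm}. Define the perturbed test function
\[
\phi^{\epsilon}(t,p,y):=\phi(t,p)+\epsilon\,u\!\left(d(p)\frac{D\phi(t,p)-y}{\sqrt\epsilon};A\right),
\]
and put $x=d(p)(D\phi(t,p)-y)/\sqrt\epsilon$. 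A direct computation gives $\partial_{y_i}\phi^{\epsilon}=-\sqrt\epsilon\,p_i\,u_{x_i}$, whence $|\partial_{y_i}\phi^{\epsilon}|-\sqrt\epsilon\,p_i=\sqrt\epsilon\,p_i\left(|u_{x_i}|-1\right)\le0$ by the gradient constraint in \eqref{CorrectorPDE}; thus the obstacle entries of \eqref{zEq} are automatically nonpositive for $\phi^{\epsilon}$. For the parabolic entry one checks that $\partial_t\phi^{\epsilon}=\phi_t+O(\sqrt\epsilon)$, that $D_p\phi^{\epsilon}-y$ is of order $\sqrt\epsilon$, and that $\frac{1}{\epsilon}(D_p\phi^{\epsilon}-y)\otimes(D_p\phi^{\epsilon}-y)$ combines with $D^{2}_{p}\phi^{\epsilon}$ to produce, to leading order, the matrix $A+AD^{2}u(x)A+(x+ADu(x))\otimes(x+ADu(x))$ occurring in \eqref{CorrectorPDE}. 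Since $u(\,\cdot\,;A)$ solves \eqref{CorrectorPDE} with eigenvalue $\lambda(A)$, the $O(1)$ terms cancel, and passing to the limit through the standard argmax/stability argument yields at $(t_0,p_0)$ the inequality
\[
-\phi_t-e^{-r(T-t_0)}\lambda\!\left(e^{r(T-t_0)}d(p_0)D^{2}\phi(t_0,p_0)d(p_0)\right)-r\,p_0\cdot D\phi+r\phi\le0,
\]
which is precisely the subsolution inequality for \eqref{NonBSpsi}.

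To conclude, one shows via $\epsilon$-independent barriers built from \eqref{gEstimate} or \eqref{gEstimate2} that $\overline z(T,p,y)\le g(p)\le\underline z(T,p,y)$, which together with the continuity of $g$ fixes the terminal value. By Theorem \ref{lampropthm}, $\lambda$ is nondecreasing and convex, hence locally Lipschitz, so \eqref{NonBSpsi} is a proper degenerate parabolic equation (after the logarithmic substitution $p_i=e^{q_i}$, which recasts it on $(0,T)\times\R^n$) for which a comparison principle holds in the growth class singled out by the hypotheses on $g$. Comparison then forces $\overline z\le\underline z$; since the reverse inequality is automatic, $\overline z=\underline z=:\psi$ is continuous, independent of $y$, a viscosity solution of \eqref{NonBSpsi}, and $z^{\epsilon}\to\psi$ uniformly on compact subsets of $(0,T)\times(0,\infty)^n\times\R^n$.

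The step I expect to be the main obstacle is the perturbed test function analysis. Because the corrector $u(\,\cdot\,;A)$ is only continuous when $\det A=0$ and has merely linear growth \eqref{Newugrowth}, $\phi^{\epsilon}$ is not a uniformly small perturbation of $\phi$, and the formal expansion above must be justified in the viscosity sense. This requires (a) localizing in the $y$-variable so that $x$ stays in a bounded region; (b) coping with the non-smoothness of $u$, using its $C^{1,\alpha}_{\mathrm{loc}}$ regularity when $\det A\neq0$ and, when $\det A=0$, a vanishing-viscosity or approximation argument together with the characterizations $\lambda_{-},\lambda_{+}$ of Theorem \ref{lampropthm}; and (c) controlling the error from freezing $A$ at $(t_0,p_0)$, which relies on the continuity of $A\mapsto\lambda(A)$ and on a stability property of the family $\{u(\,\cdot\,;A)\}$. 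A secondary technical point is establishing the comparison principle for \eqref{NonBSpsi} on the unbounded domain within the relevant growth class.
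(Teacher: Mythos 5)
Your overall plan—half‑relaxed limits $\overline z,\underline z$, the perturbed test function method with correctors from Theorem \ref{firstmainthm} and effective nonlinearity $\lambda$, then comparison to conclude $\overline z\le\underline z$—is exactly the paper's strategy, and you correctly identify the technical hurdles at the end. But the proposal leaves the central step unjustified, and the sketch as written does not actually close the argument.

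The pivotal issue is your ansatz $\phi^{\epsilon}(t,p,y)=\phi(t,p)+\epsilon u\bigl(d(p)(D\phi-y)/\sqrt\epsilon;A\bigr)$ with no tilting of the fast variable. At a maximum point of $z^{\epsilon}-\phi^{\epsilon}$, the subsolution property of $z^{\epsilon}$ and the equality of $y$-gradients there yield only $|u_{x_i}|\le 1$, which is \emph{not strict}. When $|u_{x_i}|=1$ for some $i$, the cell equation \eqref{CorrectorPDE} gives only the one‑sided bound $\tfrac12\tr\sigma\sigma^t(\cdots)\ge\lambda(A)$, which points in the \emph{wrong} direction: from $0\ge-\phi_t-\tfrac12\tr\sigma\sigma^t(\cdots)$ you cannot deduce $0\ge-\phi_t-\lambda(A)$, which is the subsolution inequality you want. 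To make the argument work, the $O(1)$ terms must cancel exactly, and this requires the cell PDE to hold with equality, i.e.\ $|u_{x_i}|<1$ strictly at the relevant $x$. The paper achieves this by rescaling the fast variable, setting $x^{\epsilon,\delta}=(1+\delta)d(p)(D\phi-y)/\sqrt\epsilon$ and $A_0=(1+\delta)^2 d(p_0)D^2\phi(t_0,p_0)d(p_0)$: then the gradient constraint $|z^\epsilon_{y_i}|\le\sqrt\epsilon p_i$ at the maximum forces $(1+\delta)|u_{x_i}|\le1$, hence $|u_{x_i}|\le1/(1+\delta)<1$, which simultaneously makes the corrector PDE an equality and confines $x^{\epsilon,\delta}$ to the bounded set $\Omega$ where $u$ is smooth (Corollary \ref{OmegaCor}). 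A mirror $(1-\delta)$ rescaling is used for the supersolution half, where its role is to make the obstacle constraint on $\phi^{\epsilon,\delta,\rho}$ \emph{strict} at the minimum, so that the supersolution inequality for $z^\epsilon$ must come from the parabolic branch; there the corrector is additionally mollified ($u^\rho=\eta^\rho*u$) because one cannot force $x$ into $\Omega$. These $(1\pm\delta)$ devices are not cosmetic; without them the perturbed test function argument does not close, and your proposal gives no substitute.

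Two smaller points. For the case $\det A=0$ you suggest invoking the min‑max characterizations $\lambda_\pm$; the paper instead perturbs $\phi$ by $\eta\bigl(|t-t_0|^2+|p-p_0|^2\bigr)$ so that $\det D^2\phi(t_0,p_0)\neq0$, which is simpler, and indeed states explicitly that $\lambda_\pm$ are not used in the proof of Theorem \ref{lastmainthm}. And the compactness of the sequence of maximizers/minimizers $(t_k,p_k,y_k)$ in the $y$-direction is not automatic: it is supplied by the quantitative growth estimates of Lemma \ref{TechnicalLemma}, in particular the lower barrier $z^{\epsilon}\ge\sum_i\sqrt\epsilon p_i|y_i|-KT\epsilon$ on $(0,T-\eta]$, which you would need to prove. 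The final comparison step is lighter than you make it sound: the paper does not set up a general comparison principle for \eqref{NonBSpsi} on the unbounded domain but instead penalizes with $\eta(1/t+\sum_i p_i)$, uses the $\varphi\le\underline z\le\overline z\le L$ (or $L\sum p_i$) sandwich, and reaches a contradiction at an interior maximum via the monotonicity of $\lambda$.
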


% Organization of paper:
In section \ref{AnalEig}, we study the eigenvalue problem in detail and prove Theorem \ref{firstmainthm}.  In section \ref{EigProp}, we prove Theorem \ref{lampropthm} which verifies some important properties of $\lambda$.  Finally in section \ref{zConverge}, we establish Theorem \ref{lastmainthm}, which characterizes the large risk aversion, small transaction cost option price.   Before undertaking this work, we present the mathematical model from which the equations derive  and perform some formal computations that will guide our intuition for analyzing $z^\epsilon$ for $\epsilon$ small.

{\bf The market model}. Following the work of \cite{DPZ, BS}, we consider a Brownian motion based financial market consisting of $n$ stocks and a money market account (a ``bond") with interest rate $r\ge 0$. The stocks are modeled as a stochastic process satisfying the SDE

% Stock SDE
\begin{equation}\label{stockSDE}
dP^i(s) =\sum^n_{j=1}\sigma_{ij} P^i(s)dW^j(s), \quad s\ge 0, \quad i=1,\dots, n \nonumber
\end{equation}
where $(W(t), t\ge 0)$ is a standard $n$-dimensional Brownian motion.  
We assume each participant in the market assumes a trading strategy, which is simply a way of purchasing and selling shares of stock and 
the money market account.  Furthermore, in this model we assume that participants pay transaction costs that are proportional to the 
amount of the underlying stock; the proportionality constant we use is $\sqrt{\epsilon}$.

\par On a time interval $[t,T]$, a trading strategy will be modeled by a pair of vector processes
$(L, M)=((L^1, \dots, L^n),(M^1, \dots, M^n))$.  Here $L^i(s)$ represents 
the cumulative purchases of the $i$th stock and $M^i(s)$ represents the cumulative sales of the $i$th stock at time $s\in [t,T]$; we assume $L^i, M^i$ are 
non-decreasing processes, adapted to the filtration generated by $W$, that satisfy  $L^i(t)=M^i(t)=0$ for $i=1,\dots,n$. Associated to a given trading strategy $(L,M)$ is
a process $X$, the amount of dollars held in the money market, and processes $Y^i$, the number of shares of the $i$th stock held, for $i=1,\dots,n.$ These processes
are modeled by the SDE
$$
\begin{cases}
dX(s) = r X(s)ds + \sum^n_{i=1}\left( - (1+\sqrt{\epsilon})P^i(s) dL^i(s)+(1-\sqrt{\epsilon})P^i(s)dM^i(s)\right)\\
dY^i(s) = dL^i(s)- dM^i(s) \quad i=1,\dots, n 
\end{cases}\;t\le s\le T.
$$

\par 
We assume that for a given amount of wealth $w\in \R$, a seller of a European option with maturity $T$ and payoff $g(P(T))\ge 0 $ has the {\it utility}

% utility
\begin{equation}
U_\epsilon(w)=1-e^{-w/\epsilon}. \nonumber
\end{equation}
In particular, the seller has constant {\it risk aversion} 
$$\frac{-U_ \epsilon''(w)}{U_ \epsilon'(w)}=\frac{1}{\epsilon}.
$$
If the seller does not sell the option, his expected utility from final wealth is
\begin{equation}\label{Veq}
v^{\epsilon,f}(t,x,y,p)=\sup_{L,M}\E U_\epsilon(X(T)+Y(T)\cdot P(T));  \nonumber
\end{equation}
here $X(t)=x, Y(t)=y$ and $P(t)=p$.  If he does sell the option, he will have to payout $g(P(T))$ at time $T$, so his expected utility from final wealth is

\begin{equation}
v^\epsilon(t,x,y,p)=\sup_{L,M}\E U_\epsilon(X(T)+Y(T)\cdot P(T) - g(P(T))). \nonumber
\end{equation}
Since $U_\epsilon$ is monotone increasing, $v^{\epsilon}\le v^{\epsilon,f}$.  We define the seller's price $\Lambda_\epsilon$ as the 
amount which offsets this difference (and makes the seller ``indifferent" to selling the option or not)
% Option price
\begin{equation}
v^\epsilon(t,x+\Lambda_\epsilon,y,p)=v^{\epsilon,f}(t,x,y,p). \nonumber
\end{equation}
See \cite{RC} for more on this approach to option pricing. 

% PDE for v
\par We now specialize to the case $r=0$. This is done without any loss of generality as
\begin{equation}\label{vrequal0}
(t,x,y,p)\mapsto v(t,e^{r(T-t)}x,y,e^{r(T-t)}p)
\end{equation}
is the value function for $r>0$, provided $v$ is the value function when $r=0$.  As in the single asset case \cite{DPZ}, we have the following proposition. Part $(i)$ follows directly from Theorem 2 and Theorem 3 of \cite{DPZ}; part $(ii)$
follows from basic calculus.

\begin{prop}\label{zSolnProp}
(i) $v^\epsilon, v^{\epsilon,f}$ are continuous viscosity solutions of the PDE
{\small
\begin{equation}\label{veq}
\max_{1\le i\le n}\left\{v_{y_i}-(1+\sqrt{\epsilon})p_i v_x,-v_{y_i}+(1-\sqrt{\epsilon})p_i v_x, v_t + \frac{1}{2}d(p)\sigma\sigma^t d(p)\cdot D^2_p v\right\}=0,
\end{equation}
}
for $(t,y,p)\in (0,T)\times\R^n\times (0,\I)^n$ and satisfy 
\begin{equation}\label{vterm}
v^\epsilon(T,x,y,p)=1-\exp( -(x+y\cdot p -g(p))/\epsilon ) \quad\text{and} \quad v^{\epsilon,f}(T,x,y,p)=1-\exp(-(x+y\cdot p)/\epsilon). \nonumber
\end{equation}
(ii)Define $z^\epsilon, z^{\epsilon,f}$ implicitly via
% defining z
$$
\begin{cases}
v^\epsilon=U_\epsilon(x+y\cdot p - z^\epsilon)=1- \exp(-(x+y\cdot p - z^\epsilon)/\epsilon) \nonumber\\
v^{\epsilon,f}=U_\epsilon(x+y\cdot p - z^{\epsilon,f})=1-  \exp(-(x+y\cdot p - z^{\epsilon,f})/\epsilon)
\end{cases}.
$$
Then $z^\epsilon, z^{\epsilon,f}$ are viscosity solutions of \eqref{zEq} satisfying the terminal conditions
$$
z^{\epsilon}(T,p,y)=g(p)\quad \text{and}\quad z^{\epsilon,f}(T,p,y)=0.
$$ 
\end{prop}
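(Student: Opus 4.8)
The statement splits as the paper indicates, and the two halves call for quite different arguments.

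\textbf{Part (i).} I would regard $v^\epsilon$ and $v^{\epsilon,f}$ as value functions of \emph{singular} stochastic control problems, the controls being the nondecreasing processes $(L,M)$, and run the standard programme for such problems. First one establishes the dynamic programming principle for $v^\epsilon$ and $v^{\epsilon,f}$. Next, combining the DPP with It\^o's formula applied to a smooth test function, one verifies that each is a viscosity solution of the Hamilton--Jacobi--Bellman equation \eqref{veq}: the first two entries of the $\max$ record that the value cannot be strictly increased by an instantaneous transaction in stock $i$ --- buying moves $(x,y)$ infinitesimally in the direction $(-(1+\sqrt\epsilon)p_i,\,e_i)$ and selling in the direction $((1-\sqrt\epsilon)p_i,\,-e_i)$ --- while the third entry is the Dynkin/HJB term governing the no-transaction region. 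Continuity of $v^\epsilon,v^{\epsilon,f}$ comes from Lipschitz estimates on the controlled dynamics in the initial data $(x,y,p)$, and uniqueness within the class of continuous viscosity solutions with the prescribed terminal data from a comparison principle for \eqref{veq}; the first-order ``constraint'' entries are Lipschitz in $(v,Dv)$ and cause no trouble, and since the price process stays in $(0,\infty)^n$ no lateral boundary condition is needed. For one stock this is exactly Theorems 2 and 3 of \cite{DPZ}, and the passage to $n$ stocks is purely notational: neither the DPP, nor the test-function computations, nor the comparison argument depends on the dimension in an essential way.

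\textbf{Part (ii).} This is a change of the dependent variable, dictated by the exponential form of $U_\epsilon$. Writing $\Phi:=x+y\cdot p-z$ with $z=z(t,p,y)$, the relation $v=U_\epsilon(\Phi)=1-e^{-\Phi/\epsilon}$ gives
$$v_x=\tfrac1\epsilon e^{-\Phi/\epsilon}=\tfrac1\epsilon(1-v)>0,$$
which is precisely what forces the implicitly-defined $z$ to be independent of $x$ (equivalently $z=x+y\cdot p+\epsilon\log(1-v)$). Differentiating the relation, and for the moment treating $z$ as smooth, one computes
$$v_{y_i}-(1+\sqrt\epsilon)p_iv_x=\tfrac1\epsilon e^{-\Phi/\epsilon}\big(-z_{y_i}-\sqrt\epsilon\,p_i\big),\qquad -v_{y_i}+(1-\sqrt\epsilon)p_iv_x=\tfrac1\epsilon e^{-\Phi/\epsilon}\big(z_{y_i}-\sqrt\epsilon\,p_i\big),$$
so the maximum of the first two entries of \eqref{veq} is $\tfrac1\epsilon e^{-\Phi/\epsilon}(|z_{y_i}|-\sqrt\epsilon\,p_i)$; and, using $v_t=-\tfrac1\epsilon e^{-\Phi/\epsilon}z_t$ together with
$$D^2_p v=-\tfrac1\epsilon e^{-\Phi/\epsilon}\Big(D^2_p z+\tfrac1\epsilon(D_pz-y)\otimes(D_pz-y)\Big),$$
one checks that the remaining entry of \eqref{veq} likewise reduces, upon dividing by $\tfrac1\epsilon e^{-\Phi/\epsilon}$, to the first entry of \eqref{zEq}. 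Dividing \eqref{veq} through by the everywhere-positive factor $v_x=\tfrac1\epsilon e^{-\Phi/\epsilon}$ --- which leaves the ``$\max=0$'' structure intact --- therefore produces \eqref{zEq}. To upgrade this from a formal identity to a statement about viscosity solutions I would invoke the general principle that a $C^1$, strictly monotone change of the unknown carries viscosity solutions to viscosity solutions of the transformed equation: $w\mapsto U_\epsilon(w)$ is a smooth strictly increasing bijection of $\R$ onto $(-\infty,1)$, so if $\phi$ touches $z^\epsilon$ from above at a point then $U_\epsilon(x+y\cdot p-\phi)$ touches $v^\epsilon$ from below there, and the viscosity inequality satisfied by $v^\epsilon$ transforms, via the computation above, into the one required of $z^\epsilon$; touching from the other side is symmetric (the change is order-reversing in $z$, so sub- and supersolution roles interchange, which is immaterial since both $v^\epsilon$ and $v^{\epsilon,f}$ are solutions). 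The terminal conditions are then immediate: $v^\epsilon(T,\cdot)=1-e^{-(x+y\cdot p-g(p))/\epsilon}$ forces $z^\epsilon(T,p,y)=g(p)$, and $v^{\epsilon,f}(T,\cdot)=1-e^{-(x+y\cdot p)/\epsilon}$ forces $z^{\epsilon,f}(T,p,y)=0$.

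\textbf{The main obstacle.} In truth there is no deep obstacle --- this matches the paper's remark that (i) is ``direct'' from \cite{DPZ} and (ii) is ``basic calculus.'' The one spot that genuinely demands care is the viscosity bookkeeping in part (ii): keeping straight which way the monotone map $U_\epsilon$ sends test functions, confirming that the gradient-constraint entries transform cleanly (they do, being linear in $Dv$), and observing that the multiplier $v_x$ is locally bounded away from $0$ and $\infty$ so that dividing by it cannot flip the sign of a viscosity inequality. For part (i), the only non-cosmetic issue in going from $n=1$ to general $n$ is the comparison principle on the unbounded domain $(0,\infty)^n$ with a degenerate diffusion, and this proceeds exactly as in \cite{DPZ}.
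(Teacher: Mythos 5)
Your proposal takes the same route the paper does: for part (i) it cites \cite{DPZ} (Theorems 2 and 3) and observes that going from one stock to $n$ is notational; for part (ii) it carries out the ``basic calculus'' change of variable $z=x+y\cdot p+\epsilon\log(1-v)$ and transports viscosity inequalities through the strictly increasing map $U_\epsilon$. Your derivative computations (for $v_{y_i}\pm$, $v_t$, $D^2_pv$) are correct, and the remark that the positive multiplier $v_x=\tfrac1\epsilon e^{-\Phi/\epsilon}$ preserves the sign structure of the max is the right way to justify the passage at the level of viscosity solutions.

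Two small points worth flagging. First, the $x$-independence of $z^\epsilon$ is not a consequence of the algebraic identity $v_x=\tfrac1\epsilon(1-v)$ as you phrase it; rather, it follows from the structural factorization $v(t,x,y,p)=1+e^{-x/\epsilon}(v(t,0,y,p)-1)$ (the paper's equation \eqref{vVarReduction}), which in turn comes from the exponential utility and the affine $x$-dynamics; that identity is then equivalent to, not the source of, the $x$-independence of $z$. Second, if you carry the $rp\cdot D_pv+rxv_x$ terms from \eqref{veq} through the change of variables you obtain an extra $rp\cdot(y-D_pz)+rx$ in the parabolic entry of the max, which does not appear in \eqref{zEq}; so strictly speaking part (ii) as stated is the $r=0$ case, consistent with the paper's reduction \eqref{vrequal0} immediately afterwards. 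Neither affects the substance of your argument, but both deserve a sentence.
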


% x-independence
\begin{rem}The main virtue of working with the exponential utility function is that the value functions typically depend on the $x$ variable in a simple way. Notice that 
$$
X(s)=x + \int^{s}_{t}\left\{-(1+\sqrt{\epsilon})P(s)\cdot dL(s) + (1- \sqrt{\epsilon})P(s)\cdot dM(s)\right\}, \quad t\le s \le T
$$
and so $v=v^\epsilon,v^{\epsilon,f}$ satisfy
\begin{equation}\label{vVarReduction}
v(t,x,y,p) =1+ e^{-x/\epsilon}(v(t,0,y,p)-1).
\end{equation}
This is convenient as it reduces the variable dependence of solutions of \eqref{veq}.  Moreover, using \eqref{vVarReduction}, it is straightforward to check that 
$$
z^\epsilon, z^{\epsilon,f}\; \text{are {\it independent }of $x$}. 
$$
\end{rem}
\noindent {\bf The large risk aversion, small transaction cost limit.} 
Directly from the definitions of $\Lambda_\epsilon$, $z^\epsilon$ and $z^{\epsilon,f}$ we see
\begin{equation}
\Lambda_\epsilon=z^\epsilon - z^{\epsilon,f}.\nonumber
\end{equation}
Consequently, in order to understand the limiting option price it suffices to study $\lim_{\epsilon\rightarrow 0^+}z^\epsilon$ and $\lim_{\epsilon\rightarrow 0^+}z^{\epsilon,f}$. Therefore, the problem of characterizing the limiting option price is 
reduced to that of asymptotic analysis of solutions nonlinear parabolic PDE.

\par Below, we give a step-by-step formal derivation of how we arrived at the PDE [equation \eqref{NonBSpsi}] for the limit $\psi$ and the PDE [equation \eqref{CorrectorPDE}] arising in the eigenvalue problem.
These heuristic calculations are arguably the most important part of our work since the techniques we use later are founded on these results.  These computations are based largely on section 3.2 of \cite{BS}. 
\\\\
{\bf Step 1.} $\max_{1\le i\le n}\{|z_{y_i}^\epsilon|-\sqrt{\epsilon}p_i\}\le 0$, so we expect $\lim_{\epsilon\rightarrow 0^+}z^\epsilon$ to be independent of $y$.  This observation leads to the choice of {\it ansatz}
$$
z^{\epsilon}(t,p,y)\approx \psi(t,p)+\epsilon u(x^\epsilon(t,p,y)),
$$
for $\epsilon$ small. Here $\psi$, $u$ and $x^\epsilon$ are yet to be determined. We also observe that since
$$
\epsilon |x_{y_i}^\epsilon\cdot Du(x^\epsilon)|\approx |z_{y_i}^\epsilon|\le \sqrt{\epsilon}p_i,
$$
$x^\epsilon$ (and its derivatives) should probably scale at worst like $1/\sqrt{\epsilon}$.  
With this assumption, 
\begin{eqnarray}
I^\epsilon&:=&-z^\epsilon_t -\frac{1}{2}\tr\left(d(p)\sigma\sigma^t d(p)\left(D^2_p z^\epsilon + \frac{1}{\epsilon}(D_p z^\epsilon-y)\otimes (D_p z^\epsilon-y) \right)\right)\nonumber \\
 &\approx&  -\psi_t
-\frac{1}{2}\tr\sigma\sigma^t\text{{\huge(}}d(p)D^2\psi d(p)+ \left(\sqrt{\epsilon}D_px^\epsilon d(p))^tD^2u(x^\epsilon)(\sqrt{\epsilon}D_px^\epsilon d(p)\right) + \nonumber \\
&& \left. \left(d(p)\frac{D\psi - y}{\sqrt{\epsilon}} +(\sqrt{\epsilon}D_px^\epsilon d(p))^tDu(x^\epsilon)\right)\otimes \left(d(p)\frac{D\psi - y}{\sqrt{\epsilon}} +(\sqrt{\epsilon}D_px^\epsilon d(p))^tDu(x^\epsilon)\right)\right).
\nonumber
\end{eqnarray}

\noindent {\bf Step 2.} Notice that 
\begin{align*}
\sqrt{\epsilon}D_p\left(d(p)\frac{D\psi - y}{\sqrt{\epsilon}}\right)d(p) &= d(p)D^2\psi d(p) 
+\sqrt{\epsilon}\; \text{diag}\left(d(p)\frac{D\psi - y}{\sqrt{\epsilon}}\right)
\end{align*}
This minor observation and the above computations lead us to choose the new ``variable"
\begin{equation}
x^\epsilon:=d(p)\frac{D\psi-y}{\sqrt{\epsilon}} \nonumber
\end{equation}
and the new ``parameter" 
\begin{equation}
A:=d(p)D^2\psi d(p). \nonumber
\end{equation}
We further {\it postulate} that there is a function $\lambda$ such that
\begin{equation}\label{AlmostPDE}
\psi_t+\lambda(A)=0. \nonumber
\end{equation}

\noindent {\bf Step 3.} With the above choices and postulate,

\begin{equation}
I^\epsilon\approx\lambda(A)+G(D^2u(x^\epsilon),Du(x^\epsilon),x^\epsilon;A) \nonumber
\end{equation} 
and also for $i=1,\dots,n$
\begin{equation}
|u_{x_i}(x^\epsilon)| \lesssim 1. \nonumber
\end{equation}
Since 
$$
\max_{1\le i\le n}\{I^\epsilon,|z_{y_i}|-\sqrt{\epsilon}p_i\}=0,
$$
we will require that $u$ and $\lambda(A)$ satisfy the PDE \eqref{CorrectorPDE}. In view of estimates we will later derive on $z^\epsilon$ (see inequality \eqref{zgrowth1}), we additionally require $u$ to satisfy the growth condition \eqref{Newugrowth}.
In summary, we are lead to eigenvalue problem outlined in \eqref{CorrectorPDE} and \eqref{Newugrowth}.

\par If we can solve the eigenvalue problem \eqref{CorrectorPDE} {\it uniquely, for a nondecreasing function} $\lambda$, we have the solution of the PDE \eqref{NonBSpsi}  %$\psi_t =\lambda(d(p)D^2\psi d(p))$ 
as a candidate for the limit of $z^\epsilon$ as $\epsilon\rightarrow 0^+.$  We remark that the procedure described above is philosophically similar
to the formal asymptotics of periodic homogenization. In analogy with that framework, $\lambda$ plays the role of the effective Hamiltonian, and the eigenvalue problem 
plays the role of the cell problem \cite{LPV, LCE}. Finally, we note that the same heuristic argument shows that $z^{\epsilon,f}$ satisfies the PDE \eqref{NonBSpsi} except with the terminal condition $\psi|_{t=T}\equiv 0$. 
So we conclude  $\lim_{\epsilon\rightarrow 0^+}z^{\epsilon,f}=0$ and in particular, 
$$
\lim_{\epsilon\rightarrow 0^+}\Lambda_\epsilon=\lim_{\epsilon\rightarrow 0^+}z^\epsilon.
$$
\begin{rem}\label{CaseRpos}
It is possible to use the change of variables in \eqref{vrequal0} to deduce that the corresponding nonlinear Black-Scholes equation for a positive interest rate $r>0$ is
$$
\psi_t+e^{-r(T-t)}\lambda\left(e^{r(T-t)}d(p)D^2\psi d(p)\right)+ rp\cdot D\psi-r\psi=0.
$$
See section 3.1 of \cite{BS} for more details.
\end{rem}
\begin{rem}
While Theorem \ref{lastmainthm} does not cover every possible payoff function $g$ for a European option on several assets, it covers many that arise in practice. 
Example payoffs which have financial interpretations are
$$
g(p)=\left(\sum^n_{i=1}p_i - K\right)^+, \quad g(p)=\sum^n_{i=1}\left(K- p_i \right)^+, \quad g(p)=K- \left(\prod^n_{i=1}p_i\right)^{1/n}.
$$
\end{rem}

\begin{rem}
In view of the asymptotic expansion \eqref{SevVarZasym} and the growth condition \eqref{Newugrowth}, 
$$
\Lambda^\epsilon(t,p,y)\approx \psi(t,p)+\sum^n_{i=1}\sqrt{\epsilon}p_i|\psi_{p_i}(t,p)-y_i|
$$
as $\epsilon\rightarrow 0^+.$  Thus for small $\epsilon$, $\Lambda_\epsilon$ is a sum of the limiting option price plus a term that naturally resembles a transaction cost. 
\end{rem}

%%%%%%%%%%%%%%%%%%%%%%%%%%%%%%%%%%%%%%%%%%%%%%%%%%%%%%%
\section{A nonlinear eigenvalue problem}\label{AnalEig}

% Intro comments
In this section, we prove the first part of Theorem \ref{firstmainthm} which is the statement that the eigenvalue problem is well posed. The methods are largely based on the approach given in our previous article \cite{Hynd2}, however we consider this work a considerable extension.  First, we give a definition that will allow for clear statements to follow.  This definition involves viscosity solutions of nonlinear elliptic PDE and we refer readers to the standard sources for background material on this concept \cite{CIL, FS, BC}. We shall also employ the notation of \cite{CIL}, and for any $n\times n$ matrix $B$ below, we will denote 
$|B|:=\sup\{|Bw|: w\in \R^n, |w|=1\}$.

% Definition in terms of test functions 
\begin{defn}\label{ViscEigDef}
$u\in USC(\R^n)$ is a {\it viscosity subsolution} of $\eqref{CorrectorPDE}$ with {\it eigenvalue $\lambda\in\R$}
if for each $x_0\in\R^n$, 
$$
\max_{1\le i\le n}\left\{\lambda +G(D^2\phi(x_0),D\phi(x_0),x_0;A), |\varphi_{x_i}(x_0)|-1\right\}\le 0,
$$
whenever $u-\varphi$ has a local maximum at $x_0$ and $\varphi \in C^2(\R^n)$. $v\in LSC(\R^n)$ is a {\it viscosity supersolution}
of $\eqref{CorrectorPDE}$ with {\it eigenvalue $\mu\in\R$} if for each $y_0\in\R^n$, 
$$
\max_{1\le i\le n}\left\{\mu+G(D^2\psi(y_0),D\psi(y_0),y_0;A), |\psi_{x_i}(y_0) |-1\right\}\ge 0,
$$
whenever $v-\psi$ has a local minimum at $y_0$ and $\psi \in C^2(\R^n)$.  $u\in C(\R^n)$ is a {\it viscosity 
solution} of \eqref{CorrectorPDE} with {\it eigenvalue} $\lambda\in\R$ if it is both a viscosity sub- and supersolution of \eqref{CorrectorPDE} with
eigenvalue $\lambda.$
\end{defn}

\subsection{Comparison of eigenvalues}
We start our treatment of the eigenvalue problem by establishing a fundamental comparison principle that will
allow us to compare eigenvalues associated with sub- and supersolutions of \eqref{CorrectorPDE}. 

% comparison of e-values
\begin{prop}\label{Newcomparison}
Suppose $u$ is a subsolution of \eqref{CorrectorPDE} with eigenvalue $\lambda$ and that 
 $v$ is a supersolution of \eqref{CorrectorPDE} with eigenvalue $\mu$. If in addition
\begin{equation}\label{NewLimitForWW}
\limsup_{|x|\rightarrow \infty}\frac{u(x)}{\sum^n_{i=1}|x_i|}\le 1\le \liminf_{|x|\rightarrow \infty}\frac{v(x)}{{\sum^n_{i=1}|x_i|}}, \nonumber
\end{equation}
then $\lambda\le \mu.$
\end{prop}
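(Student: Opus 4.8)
The plan is to argue by contradiction using the doubling-of-variables technique familiar from ergodic (cell) problems, adapted with a penalization designed for the linear growth in \eqref{NewLimitForWW}. Suppose $\mu<\lambda$ and set $2\eta:=\lambda-\mu>0$. Because $\mu<\lambda$, the subsolution inequality for $u$ with eigenvalue $\lambda$ implies the same inequality for $u$ with eigenvalue $\mu$, and with the strict slack $2\eta$; thus it suffices to derive a contradiction from having a \emph{strict} subsolution $u$ and a supersolution $v$ of \eqref{CorrectorPDE} with the common eigenvalue $\mu$, the slack $\eta$ being available to absorb the penalization errors. Throughout write
$$F(x,p,X):=\tfrac12\,\tr\,\sigma\sigma^t\left(A+AXA+(x+Ap)\otimes(x+Ap)\right),$$
so the two branches of \eqref{CorrectorPDE} are $\lambda-F\le0$ and $\max_i|u_{x_i}|-1\le0$. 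Note that the gradient branch of the subsolution condition already forces $u$ to be $1$-Lipschitz in each coordinate, so $u(x)\le u(0)+\sum_i|x_i|$; in particular the hypothesis on $u$ in \eqref{NewLimitForWW} is automatic, and the real input is the lower growth bound on $v$.

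First I would set up the penalized doubling. Put $\langle x\rangle:=\sum_{i=1}^n\sqrt{1+x_i^2}$, a $C^2$ function with $\langle x\rangle\ge\sum_i|x_i|$ whose gradient has each component of absolute value strictly less than $1$ and whose Hessian is bounded independently of $x$. For $\varepsilon,\delta>0$ consider
$$\Phi_{\varepsilon,\delta}(x,y):=u(x)-v(y)-\frac{1}{2\varepsilon}|x-y|^2-\delta\langle x\rangle-\delta\langle y\rangle.$$
Using the coordinatewise Lipschitz bound on $u$, the bound $v(y)\ge(1-o(1))\sum_i|y_i|$ from \eqref{NewLimitForWW}, and the fact that $v$ is bounded below, one checks that $\Phi_{\varepsilon,\delta}\to-\infty$ as $|x|+|y|\to\infty$: when $|x-y|$ is large the quadratic term dominates, and otherwise $-v(y)-\delta\langle y\rangle$ is eventually more negative than $u(x)-\delta\langle x\rangle$ is positive, once slopes are compared carefully. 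Hence $\Phi_{\varepsilon,\delta}$ attains a maximum at some $(x_\varepsilon,y_\varepsilon)$, and for $\delta$ fixed these points remain bounded as $\varepsilon\to0$. Applying the Crandall--Ishii theorem on sums at $(x_\varepsilon,y_\varepsilon)$ produces $X_\varepsilon,Y_\varepsilon\in{\mathcal S}(n)$ with $X_\varepsilon\le Y_\varepsilon+C\delta I$ (the $C\delta$ coming from the bounded Hessian of $\langle\cdot\rangle$) and gradients $q_\varepsilon:=\tfrac{x_\varepsilon-y_\varepsilon}{\varepsilon}+\delta D\langle x_\varepsilon\rangle$, $s_\varepsilon:=\tfrac{x_\varepsilon-y_\varepsilon}{\varepsilon}-\delta D\langle y_\varepsilon\rangle$ with $(q_\varepsilon,X_\varepsilon)\in\overline{J}^{2,+}u(x_\varepsilon)$ and $(s_\varepsilon,Y_\varepsilon)\in\overline{J}^{2,-}v(y_\varepsilon)$.

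Now I would run the comparison at this point. The strict subsolution condition gives $\mu+2\eta\le F(x_\varepsilon,q_\varepsilon,X_\varepsilon)$ together with $|q_{\varepsilon,i}|\le1$ for all $i$. For the supersolution one must dispose of its eikonal branch: since $|q_{\varepsilon,i}|\le1$ and $s_\varepsilon-q_\varepsilon=O(\delta)$, $\max_i|s_{\varepsilon,i}|$ exceeds $1$ by at most $O(\delta)$, and a short computation with the signs of $D\langle x_\varepsilon\rangle$, $D\langle y_\varepsilon\rangle$ (using $x_\varepsilon\approx y_\varepsilon$ for $\varepsilon$ small) shows that, up to an $O(\delta)$ error, the supersolution inequality must hold through the $F$-branch: $\mu\ge F(y_\varepsilon,s_\varepsilon,Y_\varepsilon)$. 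Subtracting, $2\eta\le F(x_\varepsilon,q_\varepsilon,X_\varepsilon)-F(y_\varepsilon,s_\varepsilon,Y_\varepsilon)+O(\delta)$. Here $\tfrac12\tr\,\sigma\sigma^t(A(X_\varepsilon-Y_\varepsilon)A)\le O(\delta)$ since $\sigma\sigma^t\ge0$ and $A$ is symmetric (so $X\mapsto\tr\,\sigma\sigma^t AXA$ is monotone), while the remaining quadratic part telescopes as
$$\tfrac12\left(|\sigma^t(x_\varepsilon+Aq_\varepsilon)|^2-|\sigma^t(y_\varepsilon+As_\varepsilon)|^2\right)=\tfrac12\,\sigma^t\!\left((x_\varepsilon-y_\varepsilon)+A(q_\varepsilon-s_\varepsilon)\right)\cdot\sigma^t\!\left((x_\varepsilon+y_\varepsilon)+A(q_\varepsilon+s_\varepsilon)\right),$$
which, as $|q_\varepsilon|,|s_\varepsilon|$ are bounded, has size $O\!\left(|x_\varepsilon-y_\varepsilon|+\delta\right)\left(1+|x_\varepsilon|+|y_\varepsilon|\right)$. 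Letting $\varepsilon\to0$, the standard doubling lemma gives $|x_\varepsilon-y_\varepsilon|\to0$ and $x_\varepsilon,y_\varepsilon\to\bar x_\delta$ (a maximizer of $u-v-2\delta\langle\cdot\rangle$), hence $2\eta\le C'\delta\left(1+|\bar x_\delta|\right)$; letting $\delta\to0$ then forces $2\eta\le0$, a contradiction.

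The step I expect to be the main obstacle is the last one: controlling the location $\bar x_\delta$ of the penalized maximum well enough that $\delta\,|\bar x_\delta|\to0$. The hypotheses \eqref{NewLimitForWW} give only \emph{sublinear} control on $u-v$, which a priori does not pin $\bar x_\delta$ down to $o(\delta^{-1})$. Making the argument go through requires a sharper coercivity estimate using simultaneously the coordinatewise $1$-Lipschitz bound on $u$ and the precise normalization of $v$ in \eqref{NewLimitForWW} — equivalently, a penalization growing slightly faster than $\sum_i|x_i|$ together with a carefully coupled choice of how $\varepsilon$ and $\delta$ go to $0$. The second delicate point, ruling out the eikonal branch of the supersolution inequality, is bound up with the same linear-growth bookkeeping. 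Everything else — existence of the maxima, the theorem on sums, monotonicity of $F$ in the Hessian slot, and the limit $\varepsilon\to0$ — is routine.
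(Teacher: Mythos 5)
Your proposal is honest about its two open points, and both of them are real: the additive penalization $\delta\langle x\rangle+\delta\langle y\rangle$ does not give you a clean way to rule out the eikonal branch of the supersolution at $y_\varepsilon$, and the final quadratic estimate
\[
F(x_\varepsilon,q_\varepsilon,X_\varepsilon)-F(y_\varepsilon,s_\varepsilon,Y_\varepsilon)=O\bigl((|x_\varepsilon-y_\varepsilon|+\delta)(1+|x_\varepsilon|+|y_\varepsilon|)\bigr)
\]
leaves you with $2\eta\le C'\delta(1+|\bar x_\delta|)$ and no control whatsoever on $\delta|\bar x_\delta|$. Since the hypotheses only give $\limsup u/\sum|x_i|\le 1\le\liminf v/\sum|x_i|$ with no strictness, $u-v$ need not even attain its supremum, and nothing forces $\bar x_\delta$ to stay in a ball of radius $o(1/\delta)$. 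So as written the argument does not close, and I do not see a way to repair it while keeping the additive penalization.

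The paper handles all of this at once with a single multiplicative device: instead of penalizing additively, compare $\tau u$ against $v$ for $\tau\in(0,1)$, doubling variables with $w^\tau(x,y)=\tau u(x)-v(y)$ and $\varphi_\delta=\tfrac1{2\delta}|x-y|^2$. This fixes both of your difficulties simultaneously. First, since $u\le u(0)+\sum|x_i|$ and $\liminf v/\sum|x_i|\ge 1$, we get $\tau u-v\le C+(\tau-1+o(1))\sum|x_i|\to-\infty$, so the maximizer exists and stays bounded for each fixed $\tau$; no strictness beyond $\tau<1$ is needed. Second, writing $p_\delta=\tfrac{1}{\tau}\tfrac{x_\delta-y_\delta}{\delta}$ so that $p_\delta\in\overline{J}^{1,+}u(x_\delta)$, the coordinatewise constraint $\max_i|p_{\delta,i}|\le 1$ gives for the $v$-jet gradient $\tau p_\delta$ the strict bound $\max_i|\tau p_{\delta,i}|\le\tau<1$, which eliminates the eikonal branch of the supersolution cleanly. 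Third, and most decisively, the quadratic terms telescope algebraically because of the $\tau$: one has the identity $\tau(a+b)^2-(a+\tau b)^2=(\tau-1)a^2+\tau(1-\tau)b^2$, so with $a=\sigma^t x_\tau$, $b=\sigma^t A p$ the unbounded term $|\sigma^t x_\tau|^2$ appears with the nonpositive coefficient $(\tau-1)$ and simply drops. The remaining error is $\tfrac12(\tau-1)\tr\sigma\sigma^t A+\tfrac n2\tau(1-\tau)|\sigma^t A|^2$, which is $O(1-\tau)$ uniformly in the location of the maximizer, and one concludes by letting $\tau\to1^-$. In short: replace your additive penalization by the factor $\tau<1$ on $u$; this is precisely what makes the location of the doubled maximum irrelevant and what rules out the eikonal branch without any bookkeeping.
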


\begin{proof}   % Rigourous proof
\par 1. Fix $0<\tau <1$ and set 
$$
w^\tau(x,y)=\tau u(x) - v(y), \quad x,y\in\R^n.
$$
For $\delta >0$, we also set 
$$
\varphi_\delta(x,y)=\frac{1}{2\delta}|x-y|^2, \quad x,y\in\R^n.
$$
The inequality 
\begin{eqnarray}\label{wepsIneq}
w^\tau(x,y)-\varphi_\delta(x,y) & = &\tau (u(x)-u(y)) - \frac{1}{2\delta}|x-y|^2 + \tau u(y)-v(y) \nonumber \\
                                                              &\le &\left(\sqrt{n}|x-y| - \frac{1}{2\delta}|x-y|^2\right) +  \tau u(y)-v(y) \nonumber
\end{eqnarray}
implies 
$$
\lim_{|x|+|y|\rightarrow \infty}\left\{w^\tau(x,y)-\varphi_\delta(x,y) \right\} = -\infty.
$$
Therefore, $w^\tau-\varphi_\delta$ achieves a global maximum at a point $(x_\delta, y_\delta)\in \R^n\times\R^n$.  

\par 2. According to the Crandall-Ishii Lemma \cite{CI} (see in particular Theorem 3.2 in \cite{CIL}), for each $\rho>0$, there are $X,Y\in {\mathcal S}(n)$ such that 

$$
\left(\frac{x_\delta - y_\delta}{\delta}, X\right)=\left(D_x\varphi_\delta(x_\delta,y_\delta), X\right)\in \overline{J}^{2,+}(\tau u)(x_\delta),
$$ 
\begin{equation}\label{vCalcCond}
\left(\frac{x_\delta - y_\delta}{\delta}, Y\right)=\left(-D_y\varphi_\delta(x_\delta,y_\delta), Y\right)\in \overline{J}^{2,-}v(y_\delta),
\end{equation}
and 
\begin{equation}\label{ImportantMatIneq}
\left(\begin{array}{cc}
X & 0 \\
0 & - Y
\end{array}\right)\le A +\rho A^2.
\end{equation}
Here 
$$
A=D^2\varphi_\delta(x_\delta,y_\delta)=\frac{1}{\delta}
\left(\begin{array}{cc}
I_n & -I_n \\
-I_n & I_n
\end{array}\right).
$$
Notice that \eqref{ImportantMatIneq} implies that $X\le Y.$

\par 3. Set 
$$
p_\delta=\frac{1}{\tau}\frac{x_\delta-y_\delta}{\delta}
$$
and note that $p_\delta\in \overline{J}^{1,+}u(x_\delta)$. Also observe that as $\max_{1\le i\le n}|u_{x_i}|\le 1$ (in the sense of viscosity solutions),
$$
\max_{1\le i\le n}|\tau p_\delta\cdot e_i|\le \tau <1. 
$$
Here $\{e_1,\dots,e_n\}$ denotes the usual standard basis in $\R^n.$ Since $v$ is a viscosity supersolution of \eqref{CorrectorPDE} with eigenvalue $\mu$, we have
$$
\mu + G(Y,p_\delta,y_\delta;A)\ge 0
$$
by  \eqref{vCalcCond}. As $u$ is a viscosity subsolution of \eqref{CorrectorPDE} with eigenvalue $\lambda$,
$$
\lambda + G(X/\tau,p_\delta,x_\delta;A)\le 0.
$$
Therefore,
\begin{eqnarray}\label{lammuest}
\tau \lambda -\mu &\le& - \tau  G(X/\tau,p_\delta,x_\delta;A) +G(Y,p_\delta,y_\delta;A)\nonumber \\
&\le & \frac{1}{2}(\tau -1)\tr \sigma\sigma^tA + \frac{1}{2}\tau |\sigma^t(x_\delta+Ap_\delta)|^2 - \frac{1}{2}|\sigma^t(y_\delta+\tau Ap_\delta)|^2.
\end{eqnarray}

\par 4. We now claim that $x_\delta\in\R^n$ is bounded for all small enough $\delta>0.$ If not, then there is a subsequence of $\delta\rightarrow 0$ such that
$(w^{\tau}-\varphi_\delta)(x_\delta,y_\delta)$ tends to $-\infty$.  Indeed 
\begin{align*}
(w^{\tau}-\varphi_\delta)(x_\delta,y_\delta)&=(\tau u(x_\delta)-v(x_\delta) ) + v(x_\delta)-v(y_\delta) -\frac{|x_\delta - y_\delta|^2}{2\delta}\nonumber \\
&\le (\tau u(x_\delta)-v(x_\delta) ) + \sqrt{n}|x_\delta-y_\delta|-\frac{|x_\delta - y_\delta|^2}{2\delta}\nonumber \\ 
&\le   (\tau u(x_\delta)-v(x_\delta) ) +\frac{n\delta}{2} \nonumber
\end{align*}
which tends to $-\infty$ as $\delta \rightarrow 0$ provided $\lim_{\delta\rightarrow 0^+}|x_\delta|=+\infty.$ This would be the case for some sequence of  $\delta\rightarrow 0$
if $x_\delta $ is unbounded.  However,
\begin{equation}
(w^{\tau}-\varphi_\delta)(x_\delta,y_\delta)=\max_{x,y\in\R^n}\left\{\tau u(x) - v(y) -\frac{|x-y|^2}{2\delta}\right\}\nonumber\ge \tau u(0)-v(0)>-\infty
\end{equation}
and thus $x_\delta$ lies in a bounded subset of $\R^n$.  Likewise, $y_\delta\in\R^n$ is bounded. 

\par Since 
$$
\lim_{\delta\rightarrow 0^+}\frac{|x_\delta-y_\delta|^2}{2\delta}\rightarrow 0
$$
(by Lemma 3.1 in \cite{CIL}), the sequence $( (x_\delta,y_\delta) )_{\delta>0}$
has a cluster point $(x_\tau,x_\tau)$ for a sequence of $\delta\rightarrow 0.$ Note also that  $p_\delta\in \R^n$ is a bounded sequence so we can also assume that $p_\delta\rightarrow p$
as $\delta\rightarrow 0$, for some $p\in \R^n$ with $\max_{1\le i\le n}|p_i|\le 1.$  Passing to this limit in \eqref{lammuest} gives
\begin{eqnarray}
\tau \lambda -\mu &\le &\frac{1}{2} (\tau -1)\tr\sigma\sigma^tA  +  \frac{1}{2}\tau |\sigma^t(x_\tau+Ap)|^2 - \frac{1}{2}|\sigma^t(x_\tau+\tau Ap)|^2 \nonumber \\
&\le &  \frac{1}{2}(\tau -1)\tr\sigma\sigma^tA + \frac{1}{2}(\tau -1)|\sigma^tx_\tau|^2 + \frac{1}{2}\tau(1-\tau)|\sigma^tAp|^2\nonumber \\
&\le &  \frac{1}{2}(\tau -1)\tr\sigma\sigma^tA +\frac{n}{2}\tau(1-\tau)|\sigma^tA|^2. \nonumber
\end{eqnarray}
We conclude by letting $\tau\rightarrow 1^-$.
\end{proof}
The following corollary is immediate. 
\begin{cor}
For each $A\in S(n)$, there can be at most one $\lambda$ such that \eqref{CorrectorPDE} has a solution $u$ with eigenvalue $\lambda$ satisfying the growth condition \eqref{Newugrowth}. 
\end{cor}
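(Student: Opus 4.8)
The plan is to derive the corollary directly from Proposition \ref{Newcomparison} by applying it twice, with the roles of the two hypothetical solutions interchanged. Suppose that $\lambda_1$ and $\lambda_2$ both admit solutions $u_1, u_2 \in C(\R^n)$ of \eqref{CorrectorPDE} with eigenvalues $\lambda_1$ and $\lambda_2$ respectively, each satisfying the growth condition \eqref{Newugrowth}. The first key observation is that, by Definition \ref{ViscEigDef}, a viscosity solution with eigenvalue $\lambda$ is simultaneously a viscosity subsolution and a viscosity supersolution with that same eigenvalue. The second observation is that the exact growth \eqref{Newugrowth} immediately yields both one-sided bounds appearing in \eqref{NewLimitForWW}: indeed $\lim_{|x|\rightarrow\infty} u(x)/\sum_{i=1}^n |x_i| = 1$ forces $\limsup_{|x|\rightarrow\infty} u(x)/\sum_{i=1}^n|x_i| \le 1$ and $\liminf_{|x|\rightarrow\infty} u(x)/\sum_{i=1}^n|x_i| \ge 1$.

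With these facts, the argument is a two-line application of Proposition \ref{Newcomparison}. Taking $u = u_1$ as a subsolution with eigenvalue $\lambda_1$ and $v = u_2$ as a supersolution with eigenvalue $\lambda_2$, hypothesis \eqref{NewLimitForWW} holds (using $\limsup u_1(x)/\sum|x_i| \le 1$ and $\liminf u_2(x)/\sum|x_i| \ge 1$), so the proposition gives $\lambda_1 \le \lambda_2$. Interchanging the roles — taking $u = u_2$ as a subsolution with eigenvalue $\lambda_2$ and $v = u_1$ as a supersolution with eigenvalue $\lambda_1$ — yields $\lambda_2 \le \lambda_1$. Hence $\lambda_1 = \lambda_2$, which is precisely the asserted uniqueness. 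There is essentially no obstacle: all of the analytic content (the doubling-of-variables estimate on the unbounded domain $\R^n$ and the control of the gradient at the maximum point) was already carried out in the proof of Proposition \ref{Newcomparison}, so the corollary is a purely formal consequence of it together with the trivial remark that a solution is both a sub- and a supersolution.
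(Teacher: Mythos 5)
Your proof is correct and is precisely the argument the paper has in mind — the paper simply declares the corollary ``immediate'' from Proposition \ref{Newcomparison}, and your two applications of that proposition (with the roles of sub- and supersolution swapped) are the standard way to unpack that remark.
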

Now that we know that there can be at most one solution of the eigenvalue problem, we are left to answer the question of whether or not a single solution exists. We shall see that this is in fact the case.
To approximate the values of a potential eigenvalue, we study 
% PDE for approximators
\begin{equation}\label{AdeltaPDE}
\max_{1\le i\le n}\left\{\delta u +G(D^2u,Du,x;A), |u_{x_i}|-1\right\}=0,\; x\in\R^n
\end{equation}
for $\delta>0$ and small, and seek solutions that satisfy growth condition \eqref{Newugrowth}.  The goal is to show that  the above PDE has a unique solution $u_\delta$ and that there is a sequence of $\delta\rightarrow 0^+$ such that
$\delta u_\delta(0)\rightarrow \lambda(A)$.  Moreover, we hope that $u_\delta - u_\delta(0)$ converges to a solution $u$ of \eqref{CorrectorPDE}.
First, we address the question of uniqueness of solutions of \eqref{AdeltaPDE}. As this can be handled similar to the comparison
principle for eigenvalues, we omit the proof.

\begin{prop}\label{AcomparisonU}
Suppose $u$ is a subsolution of \eqref{AdeltaPDE} and that 
 $v$ is a supersolution of \eqref{AdeltaPDE}. If in addition
$$
\limsup_{|x|\rightarrow \infty}\frac{u(x)}{\sum^n_{i=1}|x_i|}\le 1\le \liminf_{|x|\rightarrow \infty}\frac{v(x)}{\sum^n_{i=1}|x_i|},
$$
then $u\le v.$
\end{prop}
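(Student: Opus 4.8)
The plan is to run the doubling-the-variables version of the formal proof above, in parallel with the proof of Proposition~\ref{Newcomparison}; the only structural change is that the constant eigenvalues $\lambda,\mu$ are replaced by the zeroth-order terms $\delta u(\cdot)$, $\delta v(\cdot)$, and this change in fact strengthens the final estimate, producing $u\le v$ directly. To avoid a clash with the parameter $\delta$ of \eqref{AdeltaPDE}, I would use $\eta>0$ as the doubling parameter. First, fix $0<\tau<1$ and set
$$
w^\tau(x,y)=\tau u(x)-v(y)-\frac{1}{2\eta}|x-y|^2 .
$$
Exactly as in the proof of Proposition~\ref{Newcomparison}, the growth hypotheses together with $\tau<1$ give $w^\tau(x,y)\to-\infty$ as $|(x,y)|\to\infty$, so $w^\tau$ attains a global maximum at some $(x_\eta,y_\eta)$.

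Next I would apply the Theorem of Sums (Theorem~3.2 of \cite{CIL}) to obtain $X,Y\in\mathcal S(n)$ with $(\tau p_\eta,X)\in\overline{J}^{2,+}(\tau u)(x_\eta)$ and $(\tau p_\eta,Y)\in\overline{J}^{2,-}v(y_\eta)$, where $p_\eta:=\tfrac1\tau\tfrac{x_\eta-y_\eta}{\eta}$, together with the matrix inequality \eqref{ImportantMatIneq}, which forces $X\le Y$. Since $(p_\eta,X/\tau)\in\overline{J}^{2,+}u(x_\eta)$ and $\max_i|u_{x_i}|\le1$ in the viscosity sense, one has $\max_i|\tau p_\eta\cdot e_i|\le\tau<1$, so the gradient-constraint terms are strictly negative at $y_\eta$ and the supersolution property of $v$ forces the PDE inequality
$$
\delta v(y_\eta)-\tfrac12\tr\sigma\sigma^t\!\bigl(A+AYA+(y_\eta+\tau Ap_\eta)\otimes(y_\eta+\tau Ap_\eta)\bigr)\ge0 ,
$$
while the subsolution property of $u$ gives
$$
\delta u(x_\eta)-\tfrac12\tr\sigma\sigma^t\!\bigl(A+A(X/\tau)A+(x_\eta+Ap_\eta)\otimes(x_\eta+Ap_\eta)\bigr)\le0 .
$$
Forming $\tau\cdot(\text{first})-(\text{second})$ and carrying out the same algebra as in \eqref{lammuest} (using $X\le Y$ and $\sigma\sigma^t\ge0$ to discard $\tfrac12\tr\sigma\sigma^t[A(X-Y)A]\le0$) yields
$$
\delta\bigl(\tau u(x_\eta)-v(y_\eta)\bigr)\le\tfrac12(\tau-1)\tr\sigma\sigma^tA+\tfrac12\tau|\sigma^t(x_\eta+Ap_\eta)|^2-\tfrac12|\sigma^t(y_\eta+\tau Ap_\eta)|^2 .
$$

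Finally I would close the argument as in Step~4 of the proof of Proposition~\ref{Newcomparison}: comparing $w^\tau(x_\eta,y_\eta)\ge\tau u(0)-v(0)$ with $w^\tau(x_\eta,y_\eta)\le(\tau u(x_\eta)-v(x_\eta))+\tfrac{n\eta}{2}$ shows $x_\eta$, hence $y_\eta$ and $p_\eta$, stays bounded; Lemma~3.1 of \cite{CIL} gives $|x_\eta-y_\eta|^2/\eta\to0$, and along a subsequence $(x_\eta,y_\eta)\to(x_\tau,x_\tau)$, $p_\eta\to p$ with $\max_i|p_i|\le1$. Since $(x_\eta,y_\eta)$ is a maximum point, $\tau u(x_\eta)-v(y_\eta)\ge\sup_x(\tau u(x)-v(x))$ for every $\eta$; letting $\eta\to0$ in the last display and using the estimate at the end of the formal proof of Proposition~\ref{Newcomparison} (namely $(\tau-1)|\sigma^tx_\tau|^2\le0$ and $|\sigma^tAp|^2\le n|\sigma^tA|^2$) I obtain
$$
\delta\,\sup_{x\in\R^n}\bigl(\tau u(x)-v(x)\bigr)\le\tfrac12(\tau-1)\tr\sigma\sigma^tA+\tfrac n2\tau(1-\tau)|\sigma^tA|^2 .
$$
Then letting $\tau\to1^-$, the right side tends to $0$ while $\liminf_{\tau\to1^-}\sup_x(\tau u(x)-v(x))\ge\sup_x(u(x)-v(x))$ (test against near-maximizers of $u-v$); since $\delta>0$ this gives $\sup_x(u(x)-v(x))\le0$, i.e.\ $u\le v$. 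The only step that needs any care — and the one I would regard as the main obstacle — is this last passage to the limit on the left-hand side: because $u$ is merely upper semicontinuous one cannot bound $\liminf_\eta u(x_\eta)$ from below directly, so instead one exploits the maximality bound $\tau u(x_\eta)-v(y_\eta)\ge\sup_x(\tau u-v)$, and one must similarly be slightly careful with the $\tau\to1$ limit of the supremum. Everything else is verbatim the proof of Proposition~\ref{Newcomparison}, which is precisely why the authors omit it.
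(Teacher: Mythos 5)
Your proposal is correct, and it is precisely the argument the paper has in mind when it says the proof can be omitted because it is "similar to the comparison principle for eigenvalues": you run the same doubling-of-variables argument as in Proposition~\ref{Newcomparison}, with the constants $\lambda,\mu$ replaced by the zeroth-order terms $\delta u(x_\eta)$, $\delta v(y_\eta)$, and your use of the maximality bound $\tau u(x_\eta)-v(y_\eta)\ge\sup_x(\tau u-v)$ correctly handles the one semicontinuity issue that is new relative to the eigenvalue comparison.
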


\begin{cor}
For each $A\in S(n)$, there can be at most one solution of \eqref{AdeltaPDE} satisfying \eqref{Newugrowth}. 
\end{cor}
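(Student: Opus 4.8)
The plan is to obtain this as an immediate consequence of the comparison principle in Proposition \ref{AcomparisonU}, applied twice. Suppose $u_1,u_2\in C(\R^n)$ are both viscosity solutions of \eqref{AdeltaPDE} satisfying the growth condition \eqref{Newugrowth}. By definition a viscosity solution is at the same time a viscosity subsolution and a viscosity supersolution, and since \eqref{Newugrowth} asserts the \emph{exact} value of the limit,
$$
\limsup_{|x|\rightarrow\infty}\frac{u_j(x)}{\sum_{i=1}^n|x_i|}\le 1\le\liminf_{|x|\rightarrow\infty}\frac{u_j(x)}{\sum_{i=1}^n|x_i|},\qquad j=1,2.
$$
First I would invoke Proposition \ref{AcomparisonU} with $u=u_1$ regarded as a subsolution and $v=u_2$ regarded as a supersolution; the displayed inequalities for $j=1$ and $j=2$ supply precisely the one-sided growth hypothesis of that proposition, so it yields $u_1\le u_2$ on $\R^n$. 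Then I would exchange the roles of the two functions, applying the same proposition with $u=u_2$ and $v=u_1$, to get $u_2\le u_1$. Combining the two inequalities gives $u_1\equiv u_2$, which is the claimed uniqueness.

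There is no genuine obstacle: the substantive work is entirely contained in Proposition \ref{AcomparisonU}, whose proof (as the authors note) runs parallel to the doubling-of-variables argument already given for Proposition \ref{Newcomparison}, with the zeroth-order term $\delta u$ replacing the eigenvalue and making the comparison even more direct. The only subtlety to keep in mind is that the hypothesis of Proposition \ref{AcomparisonU} is a separate one-sided condition on each of the two functions, so it is essential that \eqref{Newugrowth} pins down the limit to be exactly $1$ rather than merely bounding it on one side; this is what permits the very same function to serve as the subsolution in the first application and as the supersolution in the second.
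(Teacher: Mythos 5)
Your proposal is correct and is exactly the argument the paper intends: the corollary is stated as an immediate consequence of Proposition \ref{AcomparisonU}, and the two-sided application of that comparison principle (using that \eqref{Newugrowth} gives both the $\limsup\le 1$ and $\liminf\ge 1$ bounds for each solution) is the standard derivation. Your observation that the \emph{exact} limit in \eqref{Newugrowth} is what lets the same function play both the sub- and supersolution role is a correct and worthwhile clarification.
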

% explicit sub and super solutions 
To establish existence, we need sub- and supersolutions with the appropriate growth as $|x|\rightarrow \infty$.
\begin{lem} Fix $0<\delta<1$ and $A\in {\cal S}(n)$.\\
$(i)$ There is a constant $K=K(A)>0$ such that  
%Newviscsub Newviscsuper
\begin{equation}\label{Newviscsub}
\underline{u}(x)=\left(\sum^n_{i=1}|x_i|- K\right)^+ +\frac{\tr\sigma^t\sigma A}{2\delta},\; x\in\R^n 
\end{equation}
is a viscosity subsolution of \eqref{AdeltaPDE} satisfying the growth condition \eqref{Newugrowth}. \\
$(ii)$ There is a constant $K=K(A)>0$ such that 

\begin{equation}\label{Newviscsuper}
\overline{u}(x)=\frac{K}{\delta} + \sum^n_{i=1}\begin{cases}\frac{1}{2}x_i^2,\;\;\;\;\;\; |x_i|\le 1\\  |x_i|-\frac{1}{2},|x_i|\ge 1\end{cases} ,\; x\in\R^n
\end{equation}
is a viscosity supersolution of  \eqref{AdeltaPDE} satisfying the growth condition \eqref{Newugrowth}.
\end{lem}

\begin{proof} $(i)$ Choose $K>0$ such that 
$$
\left(\sum^n_{i=1}|x_i|- K\right)^+\le \frac{1}{2}\tr \sigma\sigma^tA+\frac{1}{2}\left(|\sigma^tx| - \sqrt{n}|\sigma^tA|\right)^2, \quad x\in \R^n.
$$
As $\underline{u}$ is convex and as $\max_{1\le i\le n}|\underline{u}_{x_i}|=1$, if $(p,X)\in  J^{2,+}\underline{u}(x_0)$ then
$$
\max_{1\le i\le n}|p_i|\le 1\quad \text{and}\quad X\ge 0.
$$
Hence, 
\begin{eqnarray}
\delta \underline{u}(x_0) + G(X,p,x_0;A)&\le &\left(\sum^n_{i=1}|x_0\cdot e_i|- K\right)^+   -\frac{1}{2}\tr \sigma\sigma^tA  -\frac{1}{2}|\sigma^t(x_0 + Ap)|^2  \nonumber \\
 &\le & \left(\sum^n_{i=1}|x_0\cdot e_i|- K\right)^+ -\frac{1}{2}\tr \sigma\sigma^tA   - \frac{1}{2}\left(|\sigma^tx_0| - \sqrt{n}|\sigma^tA|\right)^2\le 0 \nonumber
\end{eqnarray}
Thus $\underline{u}$ is a viscosity subsolution. 

\par $(ii)$ Select 
$$
K:=\max\left\{-G(I,x,x;A): \; \max_{1\le i\le n}|x_i|\le 1 \right\}
$$
and assume that $(p,X)\in J^{2,-}\underline{u}(x_0)$. If $|x_0\cdot e_i|<1$ for all $i=1,\dots,n$, $\bar{u}$ is smooth in a neighborhood of $x_0$ and 
$$
\begin{cases}
\bar{u}(x_0)=\frac{K}{\delta} +\frac{|x_0|^2}{2}\\
D\bar{u}(x_0)=x_0=p\\
D^2\bar{u}(x_0)=I_n=X
\end{cases}.
$$
Therefore,
{\small
$$
\delta\bar{u}(x_0) + G(X,p,x_0;A) \ge K + G(I,x_0,x_0;A) \ge 0,
$$
}which implies 
\begin{equation}\label{NewbaruSupersoln}
\max_{1\le i\le n}\left\{\delta\bar{u}(x_0) + G(X,p,x_0;A) , |p_i|- 1\right\}\ge 0.
\end{equation}
Now suppose $|x_0\cdot e_i|\ge 1$ for some $i\in \{1,\dots,n\}$. $\bar{u}\in C^{1}(\R^n)$, so $p_i=\bar{u}_{x_i}(x_0)=x_0\cdot e_i/|x_0\cdot e_i|$ and in particular $|p_i|=1$. Thus
\eqref{NewbaruSupersoln} still holds, and consequently,  $\bar{u}$ is a viscosity supersolution.
\end{proof}
% existence of solutions u_\delta
As the existence of a unique viscosity solution now follows directly from applying Perron's method (see section 4 of \cite{CIL}, for instance), we omit the proof. 
\begin{thm}\label{yeahUnique}
Let $A\in {\cal S}(n)$. For each $\delta\in (0,1)$, there exists a unique viscosity solution $u_\delta$ of the PDE \eqref{AdeltaPDE} satisfying the growth condition \eqref{Newugrowth}.
\end{thm}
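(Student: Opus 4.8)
The plan is to read off uniqueness from the comparison principle already in hand and to obtain existence by Perron's method of sub- and supersolutions, the only subtlety being that the underlying domain is all of $\R^n$.

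Uniqueness requires nothing new: two viscosity solutions $u,v$ of \eqref{AdeltaPDE} satisfying \eqref{Newugrowth} are in particular a subsolution and a supersolution for which the hypotheses of Proposition \ref{AcomparisonU} hold (both the $\limsup$ and the $\liminf$ there equal $1$), so $u\le v$, and by symmetry $u=v$; this is exactly the corollary recorded just after Proposition \ref{AcomparisonU}. For existence, the preceding lemma supplies a viscosity subsolution $\underline u$ and a viscosity supersolution $\overline u$ of \eqref{AdeltaPDE}, each satisfying \eqref{Newugrowth}; applying Proposition \ref{AcomparisonU} to the pair $(\underline u,\overline u)$ yields $\underline u\le\overline u$ on $\R^n$. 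I then form the Perron candidate
$$
u(x):=\sup\left\{w(x):\ w\ \text{is a viscosity subsolution of}\ \eqref{AdeltaPDE}\ \text{with}\ \underline u\le w\le\overline u\ \text{on}\ \R^n\right\},
$$
so that automatically $\underline u\le u\le\overline u$.

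Perron's method now rests on two purely local facts, both valid here unchanged because \eqref{AdeltaPDE} is proper (its zeroth-order term $\delta u$ is nondecreasing in $u$): first, the upper semicontinuous envelope $u^\ast$ is again a viscosity subsolution of \eqref{AdeltaPDE}; second, were the lower semicontinuous envelope $u_\ast$ to fail to be a viscosity supersolution at some $x_0\in\R^n$, one could add a small quadratic bump supported near $x_0$ to manufacture a subsolution strictly exceeding $u$ near $x_0$ while still lying below $\overline u$, contradicting the maximality defining $u$; hence $u_\ast$ is a viscosity supersolution. Since $\underline u\le u_\ast\le u^\ast\le\overline u$ and both $\underline u(x)/\sum_{i=1}^n|x_i|$ and $\overline u(x)/\sum_{i=1}^n|x_i|$ tend to $1$ as $|x|\to\infty$, one has $\limsup_{|x|\to\infty}u^\ast(x)/\sum_{i=1}^n|x_i|\le 1\le\liminf_{|x|\to\infty}u_\ast(x)/\sum_{i=1}^n|x_i|$.

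Finally, apply Proposition \ref{AcomparisonU} once more --- with $u^\ast$ as subsolution and $u_\ast$ as supersolution --- to get $u^\ast\le u_\ast$ on $\R^n$; since always $u_\ast\le u\le u^\ast$, the three functions coincide, so $u\in C(\R^n)$ is a viscosity solution of \eqref{AdeltaPDE}, and being squeezed between $\underline u$ and $\overline u$ it satisfies \eqref{Newugrowth}. The one place where working over the unbounded domain $\R^n$ could cause trouble is the passage from Perron's sub/supersolution sandwich to an honest continuous solution, and this is precisely what the comparison Proposition \ref{AcomparisonU} --- itself proved on all of $\R^n$ --- takes care of; the remaining ingredients (that a supremum of subsolutions is a subsolution, and the bump construction) are routine and identical to the standard bounded-domain treatment in \cite{CIL}.
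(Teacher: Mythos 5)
Your proof is correct and follows exactly the route the paper intends: the paper simply remarks that existence and uniqueness follow by Perron's method (citing section 4 of \cite{CIL}) and omits the details, and your argument --- uniqueness from Proposition \ref{AcomparisonU}, the barriers $\underline u,\overline u$ from the preceding lemma, the Perron envelope plus bump construction, and the final application of the comparison principle to identify $u^\ast$ with $u_\ast$ --- is precisely the standard filling-in of that omission. Nothing differs in substance.
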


\subsection{Basic estimates}
With the existence of a unique solution $u_\delta$ of  \eqref{AdeltaPDE}, our goal is establish some estimates on $u_\delta$ that will help us pass to the limit as $\delta\rightarrow 0.$ 
A fundamental property of $u_\delta$ that we deduce below is that it is convex.  Other important estimates of $u_\delta$ will be derived 
directly from this.  The method of proof is virtually the same as in \cite{Hynd2} (Lemma 3.7) and originates from \cite{KO}.

% u convex
\begin{prop}\label{PropConvexUdelta}
Let $u_\delta\in C(\R^n)$ be the unique solution of the PDE \eqref{AdeltaPDE} subject to the growth condition \eqref{Newugrowth}. Then $u_\delta$ is convex. 
\end{prop}

% Formal proof 
\begin{proof}
1.  We first assume $u_\delta\in C^2(\R^n)$ and for ease of notation, we write $u$ for $u_\delta $. Fix  $0< \tau <1$ and set 
$$
{\mathcal C}^\tau(x,y)=\tau u\left(\frac{x+y}{2}\right)-\frac{u(x) + u(y)}{2}, \quad x,y\in\R^n.
$$
We aim to bound ${\mathcal C}^\tau$ from above and later send $\tau\rightarrow 1^-.$  We claim that ${\cal C}^\tau$ has a maximizing 
point $(x_\tau, y_\tau)\in \R^n\times \R^n$.  It suffices to show 
$$
\lim_{|x|+|y|\rightarrow\infty}{\cal C}^\tau(x,y)=-\infty.
$$

 Let $(x_k, y_k)\in \R^n\times \R^n$ be such that $|x_k|+|y_k|\rightarrow \infty$, as $k\rightarrow \infty$. Observe
 \begin{align*}
\frac{{\cal C}^\tau(x_k,y_k)}{|x_k|+|y_k|}&=\frac{\tau}{2}\frac{u\left(\frac{x_k+y_k}{2}\right)}{|x_k|+|y_k|}-
\frac{1}{2}\left\{\frac{|x_k|}{|x_k|+|y_k|}\frac{u(x_k)}{|x_k|}+\frac{|x_k|}{|x_k|+|y_k|}\frac{u(y_k)}{|y_k|}\right\}\\
&\le\frac{\tau}{2}\frac{u\left(\frac{x_k+y_k}{2}\right)}{\left|\frac{x_k+y_k}{2}\right|}-
\frac{1}{2}\left\{\frac{|x_k|}{|x_k|+|y_k|}\frac{u(x_k)}{|x_k|}+\frac{|x_k|}{|x_k|+|y_k|}\frac{u(y_k)}{|y_k|}\right\},
 \end{align*}
 when of course each numerator above is positive. This manipulation can be used to show  
 $$
 \limsup_{k\rightarrow \infty}\frac{{\cal C}^\tau(x_k,y_k)}{|x_k|+|y_k|}\le \frac{\tau -1}{2}<0. 
 $$
Hence, $\limsup_{k\rightarrow \infty}{\cal C}^\tau(x_k,y_k)=-\infty$. The claim follows as $\{(x_k, y_k)\}_{k\in \N}$ was 
an arbitrary, unbounded sequence. 

\par 2.  At any maximizing point $(x_\tau,y_\tau)$ for ${\cal C}^\tau$,
$$
0=D_x{\mathcal C}^\tau(x_\tau, y_\tau)=\frac{\tau}{2}Du\left(\frac{x_\tau+y_\tau}{2}\right)-\frac{1}{2}Du(x_\tau)
$$
and
$$
0=D_y{\mathcal C}^\tau(x_\tau, y_\tau)=\frac{\tau}{2}Du\left(\frac{x_\tau+y_\tau}{2}\right)-\frac{1}{2}Du(y_\tau).
$$
Thus,
$$
\tau Du\left(\frac{x_\tau+y_\tau}{2}\right)=Du(x_\tau)=Du(y_\tau).
$$
Also observe that $v\mapsto {\mathcal C}^\tau(x_\tau + v, y_\tau + v)$ has a maximum at $v=0$ which implies

$$
0\ge \tau D^2u\left(\frac{x_\tau+y_\tau}{2}\right)- \frac{D^2u(x_\tau)+D^2u(y_\tau)}{2}.
$$
Since, 
$$
|u_{x_i}(x_\tau)|=|u_{x_i}(y_\tau)|= \tau\left|u_{x_i}\left(\frac{x_\tau+y_\tau}{2}\right)\right|\le \tau <1
$$
for $i=1,\dots,n$, we have
$$
\delta u(z) - \frac{1}{2}\tr\sigma\sigma^t\left(A + AD^2u(z) A + (z+ADu(z) )\otimes(z+ADu(z) ) \right)=0,\quad z=x_\tau, y_\tau.
$$
Set $z_\tau=(x_\tau+y_\tau)/2$, $p_\tau=Du(z_\tau)$, and notice
\begin{eqnarray}
\delta {\mathcal C}^\tau(x,y)%&\le &\delta {\mathcal C}^\tau(x_\tau, y_\tau)  \nonumber\\
%&= &\tau \delta u\left(\frac{x_\tau+y_\tau}{2}\right)-\frac{\delta u(x_\tau) + \delta u(y_\tau)}{2} \nonumber \\
&\le& 
\frac{(\tau -1)}{2}\tr\sigma\sigma^tA + \frac{1}{2}\tr\sigma\sigma^t\left[A\left(\tau D^2u(z_\tau)-\frac{D^2u(x_\tau)+ D^2u(y_\tau)}{2} \right)A\right] \nonumber \\
&& + \frac{\tau}{2}|\sigma^t(z_\tau + ADu(z_\tau))|^2 -\frac{1}{4}|\sigma^t(x_\tau + ADu(x_\tau))|^2-\frac{1}{4}|\sigma^t(y_\tau + ADu(y_\tau))|^2 \nonumber \\
%&\le & \frac{(\tau -1)}{2}\tr\sigma\sigma^tA +  \frac{\tau}{2}|\sigma^t(z_\tau + Ap_\tau)|^2 -\frac{1}{4}|\sigma^t(x_\tau + \tau Ap_\tau)|^2-\frac{1}{4}|\sigma^t(y_\tau + \tau Ap_\tau)|^2 \nonumber \\
&\le & \frac{(\tau -1)}{2}\tr\sigma\sigma^tA + \frac{(\tau -1)}{4}(|\sigma^tx_\tau|^2+|\sigma^ty_\tau|^2) +\frac{1}{2}\tau(1-\tau)|\sigma^tAp_\tau|^2\nonumber \\
&\le & \frac{(\tau -1)}{2}\tr\sigma\sigma^tA + \frac{n}{2}\tau(1-\tau)|\sigma^tA|^2\nonumber,
\end{eqnarray}
for each $x,y\in\R^n.$ Sending $\tau\rightarrow 1^-$, we conclude that 
$$
 u\left(\frac{x+y}{2}\right)-\frac{u(x) + u(y)}{2}\le 0, \quad x,y\in\R^n.
$$
\par 3. To make this argument rigorous, we fix $0< \tau  <1$ and now define 
$$
w^\tau(x,y,z)=\tau u\left(z\right)-\frac{u(x) + u(y)}{2}, \quad x,y,z\in\R^n;
$$
and for $\eta>0$, set

$$
\varphi_\eta(x,y,z)=\frac{1}{2\eta}\left|z-\frac{x+y}{2} \right|^2, \quad x,y,z\in\R^n.
$$
Notice that
%\begin{eqnarray}
%(w^\tau -\varphi_\eta)(x,y,z)&=& \tau \left\{u\left(z\right) - u\left(\frac{x+y}{2}\right)\right\} - \frac{1}{2\eta}\left|z-\frac{x+y}{2} \right|^2\nonumber \\
%                                                             &  & + \tau u\left(\frac{x+y}{2}\right) - \frac{u(x) + u(y)}{2}\nonumber \\
%                                                             &\le & \sqrt{n}\tau \left|z-\frac{x+y}{2} \right| -  \frac{1}{2\eta}\left|z-\frac{x+y}{2} \right|^2 \nonumber  \\
%                                                             && + \tau u\left(\frac{x+y}{2}\right)-\frac{u(x) + u(y)}{2}.
%\end{eqnarray}
\begin{eqnarray}
(w^\tau -\varphi_\eta)(x,y,z)&=& \tau \left\{u\left(z\right) - u\left(\frac{x+y}{2}\right)\right\} - \frac{1}{2\eta}\left|z-\frac{x+y}{2} \right|^2\nonumber 
                                                            +{\cal C}^\tau(x,y) \\
                                                             &\le & \sqrt{n}\tau \left|z-\frac{x+y}{2} \right| -  \frac{1}{2\eta}\left|z-\frac{x+y}{2} \right|^2+ {\cal C}^\tau(x,y).
\end{eqnarray}
From our arguments in part 1 above, it follows that 
$$
\lim_{|x|+|y|+|z|\rightarrow \infty}(w^\tau-\varphi_\eta)(x,y,z)=-\infty
$$
and, in particular, that there is $(x_\eta,y_\eta,z_\eta)\in \R^n\times \R^n\times \R^n$ maximizing $w^\tau-\varphi_\eta.$  Now it is possible to 
argue as we did in the proof of Proposition \ref{Newcomparison} to conclude that ${\cal C}^\tau(x,y)\le O(1-\tau)$, as $\tau\rightarrow 1^-.$ See also the 
proof of Lemma 3.7 in \cite{Hynd2}.
\end{proof}

% Omega is boundedd 
\begin{cor}\label{OmegaDeltaBD}
There is a constant $C=C(A)>0$, independent of $0<\delta <1$, such that if $|x|\ge C$ and $p\in J^{1,-}u_\delta(x)$, then $\max_{1\le i\le n}|p_i|\ge 1$. In particular, if 
$Du_\delta(x)$ exists and $\max_{1\le i\le n}|\partial_{x_i}u_\delta(x)|<1$, then $|x|<C$. 
\end{cor}
\begin{proof}
Choose $C=C(A)$ so large that 
$$
\delta u_\delta(z)< \frac{1}{2}\tr \sigma\sigma^tA+\frac{1}{2}(|\sigma^tz|-\sqrt{n}|A|)^2, \quad |z|\ge C
$$
for $0<\delta<1$.  Recall that $J^{1,-}u_\delta(x)=\underline{\partial}u_\delta(x)$ by the convexity of $u_\delta$ (see Proposition 4.7 in \cite{BC}); here $\underline{\partial}u_\delta(x):=\{p\in \R^n: u_\delta(y)\ge u_\delta(x)+ p\cdot (y-x), \; \text{for all}\; y\in \R^n\}$. Moreover, $(p,0)\in J^{2,-}u_\delta(x)$,
and so 
$$
\max_{1\le i\le n}\{\delta u_\delta(x) +G(0,p,x;A), |p_i|-1\}\ge 0.
$$
As 
$$
\delta u_\delta(x) +G(0,p,x;A)\le \delta u_\delta(x) -\frac{1}{2}\tr \sigma\sigma^tA-\frac{1}{2}(|\sigma^tx|-\sqrt{n}|A|)^2<0,$$
it must be that $\max_{1\le i\le n}|p_i|\ge 1$.
\end{proof}

% Lipschitz Extension formula
% important lower bound
The primary importance of following corollary is in establishing a useful lower bound on $u_\delta$. The lower bound it establishes is key 
in proving the existence of an eigenvalue. 
\begin{cor}\label{NewLipLem}
There is a constant $C=C(A)>0$, independent of $0<\delta<1$, such that 
\begin{equation}\label{NewLipForm}
u_\delta(x)= \min_{|y|\le C}\left\{u_\delta(y) + \sum^n_{i=1}|x_i-y_i|\right\}, \; x\in \R^n
\end{equation}
and 
\begin{equation}\label{LowerBoundLemma}
u_\delta(x)\ge u_\delta(0) + \left(\sum^n_{i=1}|x_i|-C\right)^+, \; x\in \R^n. 
\end{equation}
\end{cor}

\begin{proof}
Set $v$ to be the right hand side of \eqref{NewLipForm}. As $\max_{1\le i\le n}|\partial_{x_i}u_\delta|\le 1$ 
$$
u_\delta \le v,
$$
for any $C>0$. In particular, choosing $y=x$ in \eqref{NewLipForm} gives $u_\delta=v$ for $|x|\le C$.  Now select $C=C(A)$ such that 
$$
K+ \sum^n_{i=1}|x_i| - \frac{1}{2}\tr\sigma\sigma^tA- \frac{1}{2}(|\sigma^tx|-\sqrt{n}|A|)^2\le 0\quad \text{for}\quad |x|\ge C,
$$
where $K$ is the constant appearing in the definition of $\bar{u}$ in equation \eqref{Newviscsuper}.  

\par It is clear that $\max_{1\le i\le n}|v_{x_i}|\le 1$ and straightforward to verify that  as $u_\delta$ is convex, $v$ is convex, as well. % We now show that $v$ is a subsolution of \eqref{AdeltaPDE}.  
Now let $(p,X)\in  J^{2,+}v(x_0)$.  If $|x_0|<C$, then $v=u_\delta$ in a neighborhood of $x_0$ and so 
$$
\max_{1\le i\le n}\left\{\delta v(x_0) +G(D^2v(x_0),Dv(x_0),x_0;A) , |p_i|- 1\right\}\le 0. 
$$
If $|x_0|\ge C$, then by the convexity of $v$
\begin{eqnarray}
\delta v(x_0) +G(D^2v(x_0),Dv(x_0),x_0;A)
&\le &\delta\left(u(0) + \sum^n_{i=1}|x_0\cdot e_i|\right)-\frac{1}{2}\tr\sigma\sigma^tA  -\frac{1}{2}(|\sigma^tx_0|-\sqrt{n}|A|)^2 \nonumber \\
%&& -\frac{1}{2}(|\sigma^tx_0|-\sqrt{n}|A|)^2 \nonumber \\
&\le &K + \sum^n_{i=1}|x_0\cdot e_i|-\frac{1}{2}\tr\sigma\sigma^tA-\frac{1}{2}(|\sigma^tx_0|-\sqrt{n}|A|)^2\le 0 \nonumber 
%\\
%&\le &0,\nonumber
\end{eqnarray}
while we always have $\max_{1\le i\le n}|p_i|\le 1$. Therefore, $v$ is a subsolution of \eqref{AdeltaPDE} with growth \eqref{Newugrowth}, and consequently 
$$
v\le u_\delta.
$$
This verifies \eqref{NewLipForm}. \par By Theorem \ref{yeahUnique}, $u_\delta(x)=u_\delta(-x)$ for all $x\in \R^n$.  As $u_\delta$ is convex, $u_\delta(x)=(u_\delta(x)+u_\delta(-x))/2\ge u_\delta(0)$ for all $x\in \R^n$. Thus, \eqref{LowerBoundLemma} follows from \eqref{NewLipForm}. 
\end{proof}

% Convergence 
\subsection{Existence}
We assume that $A$ is a fixed symmetric, $n\times n$  matrix and will now establish the existence of a unique eigenvalue $\lambda(A)$.  To this end, we define
$$
\begin{cases}
\lambda_\delta:=\delta u_\delta(0)\\
v_\delta(x):=u_\delta(x)-u_\delta(0)
\end{cases}.
$$
Notice that for $\underline{u}$ and $\overline{u}$ defined in \eqref{Newviscsub} and \eqref{Newviscsuper}, $\underline{u}(0)\le u_\delta(0)\le \overline{u}(0)$. Hence, 
$$
\frac{1}{2}\tr\sigma\sigma^tA\le \lambda_\delta \le K.
$$ 
It is also clear that $v_\delta $ satisfies
$$
\begin{cases}
|v_\delta(x)|\le  \sum^n_{i=1}|x_i|\\
|v_\delta(x)-v_\delta(y)|\le  \sum^n_{i=1}|x_i-y_i|\\
\end{cases} \quad x,y\in\R^n.
$$

% Key lemma

\begin{lem}\label{ExisLem}
%For each $A\in {\mathcal S}(n)$ t
There is a sequence $\delta_k>0$ tending to $0$ as $k\rightarrow \infty$, $\lambda(A)\in \R$, and $u\in C(\R^n)$ with $|u(x)-u(y)|\le  \sum^n_{i=1}|x_i-y_i|$ such 
that 
\begin{equation}\label{NewvEst}
\begin{cases}
\lambda(A)=\lim_{k\rightarrow \infty}\lambda_{\delta_k}\\
v_{\delta_k}\rightarrow u \;\text{in}\;\text{locally uniformly as}\; k\rightarrow \infty 
\end{cases}.\nonumber
\end{equation}
Moreover,  $u$ is a convex solution of \eqref{CorrectorPDE} with eigenvalue $\lambda(A)$ that satisfies \eqref{Newugrowth}.
\end{lem}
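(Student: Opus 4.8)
The plan is to obtain $\lambda(A)$ and $u$ as subsequential limits of the families $\lambda_\delta = \delta u_\delta(0)$ and $v_\delta = u_\delta - u_\delta(0)$, and then to identify the limit by stability of viscosity solutions. First I would invoke the two-sided bound $\tfrac{1}{2}\tr\sigma\sigma^t A \le \lambda_\delta \le K$ to extract a sequence $\delta_k \to 0^+$ along which $\lambda_{\delta_k} \to \lambda(A)$ for some $\lambda(A) \in \R$. Simultaneously, the estimates collected just before the statement --- the bound $|v_\delta(x)| \le \sum_{i=1}^n |x_i|$, the uniform Lipschitz estimate $|v_\delta(x) - v_\delta(y)| \le \sum_{i=1}^n |x_i - y_i|$, and midpoint convexity of $v_\delta$ --- allow the Arzela--Ascoli theorem on each ball together with a diagonal argument to pass to a further subsequence (still written $\delta_k$) along which $v_{\delta_k} \to u$ locally uniformly on $\R^n$, for some $u \in C(\R^n)$. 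The limit $u$ automatically inherits the Lipschitz bound $|u(x) - u(y)| \le \sum_{i=1}^n |x_i - y_i|$, the growth bound $|u(x)| \le \sum_{i=1}^n|x_i|$, and convexity.

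Next I would check that $u$ is a viscosity solution of \eqref{CorrectorPDE} with eigenvalue $\lambda(A)$. Writing $u_\delta = v_\delta + u_\delta(0)$ and noting $Du_\delta = Dv_\delta$, $D^2 u_\delta = D^2 v_\delta$, and $\delta u_\delta = \delta v_\delta + \lambda_\delta$, the PDE \eqref{AdeltaPDE} for $u_\delta$ becomes, in the viscosity sense,
\[
\max_{1\le i\le n}\left\{\lambda_\delta + \delta v_\delta - \tfrac{1}{2}\tr\sigma\sigma^t\left(A + AD^2 v_\delta A + (x + ADv_\delta)\otimes(x + ADv_\delta)\right),\ |v_{\delta,x_i}| - 1\right\} = 0 .
\]
Along $\delta_k \to 0^+$ we have $\lambda_{\delta_k} \to \lambda(A)$, while $\delta_k v_{\delta_k} \to 0$ locally uniformly because $v_{\delta_k}$ is locally bounded; hence the family of operators converges locally uniformly in all of its arguments, and the standard stability theorem for viscosity solutions (see Section 6 of \cite{CIL}) shows that $u$ solves \eqref{CorrectorPDE} with eigenvalue $\lambda(A)$.

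Finally, the growth condition \eqref{Newugrowth} is already encoded in the estimates. Corollary \ref{LowerBoundLemma} gives $v_\delta(x) = u_\delta(x) - u_\delta(0) \ge \big(\sum_{i=1}^n |x_i| - C\big)^+$ with $C$ independent of $\delta$, so passing to the limit yields $u(x) \ge \big(\sum_{i=1}^n |x_i| - C\big)^+$, whence $\liminf_{|x|\to\infty} u(x)/\sum_{i=1}^n |x_i| \ge 1$; together with $u(x) \le \sum_{i=1}^n |x_i|$ this forces $\lim_{|x|\to\infty} u(x)/\sum_{i=1}^n|x_i| = 1$. The one step that requires genuine care is the viscosity-stability passage --- specifically the bookkeeping that transfers the additive constant $\delta_k u_{\delta_k}(0)$ into the eigenvalue slot while the remaining zeroth-order term $\delta_k v_{\delta_k}$ vanishes --- but once \eqref{AdeltaPDE} has been rewritten in the displayed form above, this is routine and the argument concludes.
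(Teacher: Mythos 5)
Your proposal is correct and follows essentially the same route as the paper: compactness of $\lambda_\delta$ and $v_\delta$ from the uniform bounds, Arzel\`a--Ascoli plus diagonalization, viscosity-solution stability (Lemma 6.1 of \cite{CIL}) to pass to the limit equation, and Corollary \ref{LowerBoundLemma} for the lower growth bound. The only difference is that you spell out explicitly the substitution $u_\delta = v_\delta + u_\delta(0)$ that moves $\lambda_\delta$ into the eigenvalue slot and leaves the vanishing term $\delta v_\delta$, a bookkeeping step the paper leaves implicit.
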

%We omit the proof as it follows the proof of Lemma \ref{CONLEMMA} closely. 

\begin{proof}
It is immediate that $\lambda(A)=\lim_{k\rightarrow \infty}\lambda_{\delta_k}$ for some $\delta_k\rightarrow 0$, as $\lambda_\delta$ is bounded. The convergence assertion of a subsequence $v_{\delta_k}$ to some $u$, locally uniformly in $\R^n$, follows from the Arzel\`{a}-Ascoli theorem and a routine diagonalization argument; it is
clear that $|u(x)-u(y)|\le  \sum^n_{i=1}|x_i-y_i|$ and that $u$ is convex.  It also follows easily from the convergence assertion and the stability properties of viscosity solutions (Lemma 6.1 of \cite{CIL}) that 
$u$ satisfies the PDE \eqref{CorrectorPDE} in the sense of viscosity solutions.  As $|u(x)|\le \sum^n_{i=1}|x_i|$ for all $x\in \R^n$, 
$$
\limsup_{|x|\rightarrow \infty}\frac{u(x)}{\sum^n_{i=1}|x_i|}\le 1.
$$
By \eqref{LowerBoundLemma}, for all $|x|$ sufficiently large
$$
v_{\delta}(x)=u_\delta(x)-u_\delta(0)\ge \sum^n_{i=1}|x_i|-C,
$$
for some $C$ {\it independent} of $0<\delta<1$. Thus,

$$
\liminf_{|x|\rightarrow \infty}\frac{u(x)}{\sum^n_{i=1}|x_i|}\ge 1,
$$
and so $u$ satisfies \eqref{Newugrowth}, as well.
\end{proof}
Moreover, when $\det A\neq 0$, $u$ is a viscosity solution of the PDE of the form
\begin{equation}\label{modelPDE}
%\max\left\{Lu - b(x,Du),H(Du)\right\}=0,\; x\in\R^n
\max\left\{\lambda(A)+G(D^2u,Du,x;A),H(Du)\right\}=0,\; x\in\R^n \nonumber
\end{equation}
where $\lambda(A)+G(D^2\psi,D\psi,x;A)$ is a semi-linear, {\it uniformly} elliptic operator. Moreover, $G(X,p,x;A)$ depends quadratically on $p$
and $H$ is convex. It follows from a minor modification of proof of part Proposition 3.1 in \cite{Hynd1} that there is a constant  $C=C(O,\alpha)$ such that that 
$|u|_{C^{1,\alpha}(O)}\le C$ for each open bounded $O\subset \R^n$ and $\alpha\in (0,1)$. This completes the proof of Theorem \ref{firstmainthm}.

\begin{cor}\label{OmegaCor} Assume $\det A\neq 0$ and let $u$ be as described in the statement of Lemma \eqref{ExisLem}. Then
\begin{equation}\label{DEFOm}
\Omega:=\left\{x\in\R^n: \max_{1\le i\le n}|u_{x_i}(x)|<1 \right\}\nonumber
\end{equation}
is open and bounded. Moreover, $u\in C^\I(\Omega)$
\end{cor}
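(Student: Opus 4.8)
The key point is that $u$ satisfies, \emph{on $\Omega$}, the pure second-order equation obtained by deleting the gradient constraint from \eqref{CorrectorPDE}; once $\Omega$ is known to be open and bounded, this equation --- uniformly elliptic precisely when $\det A\neq0$ --- upgrades $u$ to $C^\infty$. So the plan has four parts: (1) identify the equation satisfied on $\Omega$; (2) show $\Omega$ is bounded; (3) show $\Omega$ is open; (4) bootstrap the regularity. For (1): if $\varphi\in C^2(\R^n)$ and $u-\varphi$ has a local maximum at $x_0\in\Omega$, then by convexity of $u$ the vector $D\varphi(x_0)$ must be the unique subgradient of $u$ at $x_0$, hence $\max_i|\varphi_{x_i}(x_0)|<1$; so the viscosity subsolution property for \eqref{CorrectorPDE} reduces to $\lambda(A)-\tfrac12\tr\sigma\sigma^t(A+AD^2\varphi(x_0)A+(x_0+AD\varphi(x_0))\otimes(x_0+AD\varphi(x_0)))\le0$. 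Symmetrically, if $u-\psi$ has a local minimum at $x_0\in\Omega$ then $D\psi(x_0)\in\underline{\partial}u(x_0)$, so $\max_i|\psi_{x_i}(x_0)|<1$ and the supersolution property gives the reverse inequality. Thus $u$ is a viscosity solution in $\Omega$ of $\lambda(A)=\tfrac12\tr\sigma\sigma^t(A+AD^2uA+(x+ADu)\otimes(x+ADu))$.

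For (2): this is the $\delta\to0$ analogue of Corollary \ref{OmegaDeltaBD}. If $|x|\ge C=C(A)$ and $p\in\underline{\partial}u(x)$, then $(p,0)\in J^{2,-}u(x)$ by convexity and $|p|\le\sqrt n$ since $\max_i|u_{x_i}|\le1$ everywhere; feeding this into the supersolution inequality and using $|\sigma^t(x+Ap)|\ge|\sigma^tx|-\sqrt n\,|\sigma^tA|$ makes the second-order term strictly negative once $C$ is large, forcing $\max_i|p_i|\ge1$, so $x\notin\Omega$; hence $\Omega\subseteq\overline{B}_C$. For (3): the subdifferential of the finite convex function $u$ is locally bounded with closed graph, hence upper semicontinuous with compact values; for $x_0\in\Omega$ the compact set $\underline{\partial}u(x_0)$ lies in the open cube $\{p:\max_i|p_i|<1\}$, so $\underline{\partial}u(x)$ lies there --- that is, $x\in\Omega$ --- for all $x$ in a neighbourhood of $x_0$.

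For (4), suppose first $\det A\neq0$. Then the equation of part (1) can be written $-\tfrac12\langle A\sigma\sigma^tA,D^2u\rangle=f(x,Du)$ with $f(x,p):=\tfrac12|\sigma^t(x+Ap)|^2+\tfrac12\tr\sigma\sigma^tA-\lambda(A)$; here $A\sigma\sigma^tA$ is a fixed positive definite matrix (because $\sigma^tA$ is nonsingular), so this is a constant-coefficient uniformly elliptic equation with polynomial right-hand side. By the preceding proposition $u\in C^{1,\alpha}_\text{loc}(\R^n)$, hence $f(\cdot,Du)\in C^{0,\alpha}_\text{loc}(\Omega)$; interior Schauder estimates give $u\in C^{2,\alpha}_\text{loc}(\Omega)$, and iterating this bootstrap (each gain for $u$ improves $f(\cdot,Du)$, which improves $u$ again) yields $u\in C^\infty(\Omega)$. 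When $\det A=0$ the $C^\infty$ assertion is vacuous, because I expect $\Omega=\varnothing$ in that case.

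\textbf{Main obstacle.} The routine content is parts (1)--(3) together with the bootstrap when $\det A\neq0$. The one genuinely delicate point is the degenerate case: showing $\Omega=\varnothing$ when $\det A=0$. This is the rigorous form of the heuristic that along a null direction $e\in\ker A$ the eigenvalue equation degenerates --- the term $\langle A\sigma\sigma^tA,D^2u\rangle$ does not see $e$, while the $e$-component of $x+ADu$ moves at unit speed along the translates $x\mapsto u(x+te)$ --- so that the equation of part (1) collapses to a first-order constraint incompatible with $\max_i|u_{x_i}|<1$ holding on an open set. I would make this precise by running the doubling-of-variables comparison of Proposition \ref{Newcomparison} on $u(\cdot)$ against $u(\cdot+te)$, $e\in\ker A$, or by differentiating a mollification of $u$ in the direction $e$ and exploiting that the leading-order term drops out.
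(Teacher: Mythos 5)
Your proposal is correct and follows the same broad route as the paper, but it fills in the argument in considerably more detail, so a comparison is worthwhile. The paper disposes of the first assertion by invoking the continuity of $Du$ (available from the preceding $C^{1,\alpha}_{\text{loc}}$ proposition under $\det A\neq 0$) for openness, and tacitly reuses the Corollary~\ref{OmegaDeltaBD} estimate (whose constant is $\delta$-independent and therefore passes to the limit) for boundedness; your proof makes both steps explicit, and your openness argument via upper semicontinuity of $\underline{\partial}u$ is a slightly more elementary substitute that does not lean on the $C^{1,\alpha}$ result. Your step~(1) --- showing that $u$ is a viscosity solution of the unconstrained equation on $\Omega$, via $J^{1,+}u(x_0)\subset\underline{\partial}u(x_0)$ for convex $u$ and $J^{1,-}u(x_0)=\underline{\partial}u(x_0)$ --- is exactly what licenses the paper's one-line appeal to interior elliptic regularity (Theorem~6.17 of~\cite{GT}), and your Schauder bootstrap is the same argument, just spelled out.

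One point to flag: the paragraph speculating that $\Omega=\varnothing$ when $\det A=0$ is not established, and it is also not needed. Reading this corollary together with the preceding proposition, the $C^\infty(\Omega)$ conclusion is only being asserted under $\det A\neq 0$ (indeed the definition of $\Omega$ as written requires $u$ to be at least $C^1$, which the paper has only proved in that case), so you can simply state the hypothesis $\det A\neq 0$ and drop the degenerate-case discussion, or present it as an aside. Stated under that hypothesis, your argument is complete and correct.
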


\begin{proof}  The first assertion follows immediately from Corollary \ref{OmegaDeltaBD} and since $x\mapsto Du(x)$ is continuous mapping of $\R^n$ into itself. 
The second assertion follows from standard elliptic regularity, as $u$ satisfies a semilinear, uniformly elliptic PDE on $\Omega$  (see Theorem 6.17 \cite{GT}).  
\end{proof}

%%%%%%%%%%%%%%%%%%%%%%%%%%%%%%%%%%%%%%%%%%%%%%%%%%%%%%%
\section{Properties of the eigenvalue function}\label{EigProp}
In view of Theorem \ref{firstmainthm}, the solution of the eigenvalue problem defines a function $\lambda: {\mathcal S}(n)\rightarrow \R$.  Below, we prove
 some important properties of $\lambda$.  
Our basic tool will be the comparison principle described in Proposition \ref{Newcomparison}.  We use this property
to show that $\lambda$ is a monotone, convex function. Moreover, the regularity assertion of Theorem \ref{firstmainthm} will be employed to establish  minmax formulae for $\lambda$.  Our main 
result is as follows. 

\begin{thm}\label{lampropthm}  Let $\lambda : {\mathcal S}(n)\rightarrow \R$ be as described in the statement  of Theorem \ref{firstmainthm}. Then \\
(i) $\lambda$ is nondecreasing,\\
(ii) $\lambda$ is convex, \\ 
(iii) for each $A\in S(n)$ and each permutation matrix $U$, 
$$
\lambda(UAU^t)=\lambda(A),
$$
and (iv) $\lambda_- \le \lambda\le \lambda_+$, where
\begin{eqnarray}
\lambda_{-}(A)&=&\sup\left\{\inf_{x\in\R^n}\left(-G(D^2\phi(x),D\phi(x),x;A)\right):  \phi\in C^2(\R^n), \max_{1\le i\le n}|\phi_{x_i}|\le 1\right\}\nonumber
\end{eqnarray} 
and 
\begin{eqnarray}
\lambda_{+}(A)&=&\inf\left\{ \sup_{\max_i|\psi_{x_i}(x)|<1}\left(-G(D^2\psi(x),D\psi(x),x;A)\right): \psi\in C^2(\R^n),\liminf_{|x|\rightarrow \infty}\frac{\psi(x)}{\sum^n_{i=1}|x_i|}\ge 1\right\}.\nonumber
\end{eqnarray} 
Furthermore, $\lambda_-(A)=\lambda(A)= \lambda_+(A)$, provided $\det A\neq 0.$
\end{thm}

In order to establish these properties, we will make use of the following characterizations of $\lambda$, which follow immediately from the existence and uniqueness of the eigenvalue function. 
The following formulae, manifestations of the comparison principle, will be used below to establish monotone upper and lower bounds on the eigenvalue that will be crucial
to deduce the other properties listed in the above theorem. 
% Max principle characteriastions 
\begin{lem} Let $A\in {\mathcal S}(n)$ and assume that $\lambda(A)$ is the solution of the eigenvalue problem associated with equation \eqref{CorrectorPDE}.  Then
\begin{eqnarray}\label{Newmaxform}
\lambda(A)&=&\sup\text{{\huge \{}} \lambda \in\R: \text{there exists a subsolution $u$ of \eqref{CorrectorPDE} with eigenvalue $\lambda$}, \nonumber\\
&& \left. \hspace{1.6in} \text{satisfying $\limsup_{|x|\rightarrow \infty}\frac{u(x)}{\sum^n_{i=1}|x_i|}\le 1$.}  \right\}
\end{eqnarray}
and
\begin{eqnarray}\label{Newminform}
\lambda(A)&=&\inf\text{{\huge \{}} \mu \in\R: \text{there exists a supersolution $v$ of \eqref{CorrectorPDE} with eigenvalue $\mu$}, \nonumber\\
&& \left. \hspace{1.7in} \text{satisfying $\liminf_{|x|\rightarrow \infty}\frac{v(x)}{\sum^n_{i=1}|x_i|}\ge 1$.}  \right\}.
\end{eqnarray}
\end{lem}

\subsection{Monotone upper and lower bounds}
In this subsection, we prove that $\lambda$ is a locally bounded, nondecreasing, convex function and therefore it is necessarily continuous.  We first show that the function
$\lambda$ is bounded above and below by monotone functions that are constructed from $\lambda_1:\R\rightarrow \R$, the solution of the eigenvalue problem found by Barles and Soner 
\cite{BS}.  Then we show $\lambda$ is convex by an elementary argument. It turns out that any convex function that is bounded above by a nondecreasing function is necessarily nondecreasing itself, and therefore we will be able to conclude that $\lambda$ is monotone.  This implies, in particular, that the PDE $\psi_t+\lambda(d(p)D^2\psi d(p))=0$ is backwards parabolic which will be useful to us in the following section.

\begin{prop}\label{LamMonBounds}
There are monotone, non-decreasing functions $\underline{\lambda}, \bar{\lambda}: {\mathcal S}(n)\rightarrow \R$
such that 
$$
\underline{\lambda}(A)\le \lambda(A) \le  \bar{\lambda}(A), \quad A\in {\mathcal S}(n).
$$
\end{prop}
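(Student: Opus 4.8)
The plan is to construct the desired lower and upper bounds $\underline\lambda,\bar\lambda$ using the one-dimensional eigenvalue function $\lambda_1:\R\to\R$ of Barles and Soner and then exploit the comparison-type characterizations \eqref{Newmaxform} and \eqref{Newminform}. Fix $A\in\mathcal S(n)$. For the lower bound I would test \eqref{Newmaxform}: take a subsolution $u$ of \eqref{CorrectorPDE} with eigenvalue $\lambda$ satisfying the growth condition, built from $\lambda_1$ in a way that makes the eigenvalue a monotone function of $A$. A natural candidate is a radial or separated ansatz --- e.g. $u(x)=\sum_{i=1}^n w(x_i)$ or a function depending only on $\sigma^t x$ --- for which the quadratic term $|\sigma^t(x+ADu)|^2$ is bounded below by a quantity controlled by the smallest relevant eigenvalue of $\sigma\sigma^t$ and of $A$; feeding this into the subsolution inequality yields $\lambda\ge\underline\lambda(A)$ for an explicit $\underline\lambda$ manufactured from $\lambda_1$ and the spectrum of $A$. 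Dually, for the upper bound I would use \eqref{Newminform}: exhibit a supersolution $v$ with eigenvalue $\mu=\bar\lambda(A)$ and $\liminf_{|x|\to\infty}v(x)/\sum_i|x_i|\ge 1$, again of the form $v(x)=\sum_i w(x_i)$ with $w$ adapted from the one-dimensional solution, dominating the cross terms $AD^2vA$ and the quadratic term from above in terms of $|A|$ or the largest eigenvalue of $A$. Then \eqref{Newmaxform}, \eqref{Newminform} give $\underline\lambda(A)\le\lambda(A)\le\bar\lambda(A)$.

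The second point is monotonicity of $\underline\lambda$ and $\bar\lambda$ with respect to the partial order on $\mathcal S(n)$. If $\underline\lambda$ and $\bar\lambda$ are expressed purely through monotone spectral functionals of $A$ --- say $\underline\lambda(A)=f(\operatorname{tr}\sigma\sigma^t A,\ \mu_{\min}(A))$ and $\bar\lambda(A)=g(\operatorname{tr}\sigma\sigma^t A,\ |A|)$ with $f,g$ nondecreasing in each argument --- then $A\le B$ forces $\operatorname{tr}\sigma\sigma^t A\le\operatorname{tr}\sigma\sigma^t B$ and the appropriate eigenvalue inequalities, and monotonicity follows. If instead I build the bounds directly via sub/supersolutions, I would argue monotonicity abstractly: if $A\le B$, a subsolution of \eqref{CorrectorPDE} for $A$ with eigenvalue $\lambda$ is (after a harmless modification) a subsolution for $B$ with an eigenvalue no larger, because replacing $A$ by $B$ in $\tfrac12\operatorname{tr}\sigma\sigma^t(A+AD^2uA+(x+ADu)\otimes(x+ADu))$ only increases this quantity when $u$ is convex (so $D^2u\ge0$) and one tracks the sign of each term; this uses the convex solution guaranteed by Theorem \ref{firstmainthm}. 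Either route gives nondecreasing $\underline\lambda,\bar\lambda$.

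The main obstacle I anticipate is controlling the cross term $AD^2uA$ --- which is genuinely matrix-valued and not sign-definite in general --- when comparing the problem for $A$ with a model one-dimensional problem; the quadratic growth in $Du$ and the coupling through $\sigma\sigma^t$ make naive term-by-term estimates lossy. The trick will be to diagonalize or to use the convexity $D^2u\ge 0$ together with $\operatorname{tr}\sigma\sigma^t(AD^2uA)=\operatorname{tr}(A\sigma\sigma^t A\,D^2u)\ge 0$ and $\le \|A\sigma\sigma^t A\|\operatorname{tr}D^2u$, converting the matrix term into scalar quantities that the one-dimensional $\lambda_1$-based barriers can absorb; the growth condition \eqref{Newugrowth} pins down $\operatorname{tr}D^2u$ in an averaged sense and keeps these estimates finite. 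Once the scalar reductions are in place the remaining computations (verifying the sub/supersolution inequalities for the explicit barriers, and checking the growth conditions) are routine, so I would not grind through them here.
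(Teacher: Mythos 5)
Your high-level strategy — build sub/supersolutions out of the one-dimensional Barles--Soner eigenfunction $\lambda_1$ and feed them into \eqref{Newmaxform}, \eqref{Newminform} — is exactly the route the paper takes, and your instinct to separate variables is correct. But as stated the proposal has two genuine gaps.

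First, the separation of variables. A naive separated ansatz $u(x)=\sum_i w(x_i)$ in the original coordinates does not decouple the PDE, because the matrix $A$ and the matrix $\sigma\sigma^t$ mix the coordinates. What actually works is to write $\sigma^t A\sigma = P\Lambda P^t$ (orthogonal diagonalization with $\Lambda=\diag(a_1,\dots,a_n)$) and change variables $y=(\sigma P)^t x$; then the second-order and quadratic terms become $\sum_i(a_i+a_i^2 w_{y_iy_i}+(y_i+a_iw_{y_i})^2)$, which separates. The crucial subtlety you are missing is that the gradient constraint $\max_i|u_{x_i}|\le 1$ does \emph{not} transform into $\max_i|w_{y_i}|\le 1$ under this change of variables; one only has the one-sided bounds $\max_i|u_{x_i}|\le|\sigma|\sqrt n\,\max_i|w_{y_i}|$ and $\max_i|u_{x_i}|\ge\frac{1}{|\sigma^{-1}|\sqrt n}\max_i|w_{y_i}|$. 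This forces you to work with a family of one-dimensional problems with a \emph{scaled} gradient constraint $|u'|\le\alpha$ (the paper's Lemma \ref{Lam1U1}), using $\alpha=1/(|\sigma|\sqrt n)$ for the subsolution and $\alpha=|\sigma^{-1}|\sqrt n$ for the supersolution. Without this rescaling the separated function is neither a sub- nor a supersolution in the $x$-variables, and the argument fails.

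Second, the alternative monotonicity argument you sketch — that if $A\le B$ and $u$ is a convex subsolution for $A$ then it is a subsolution for $B$ with an eigenvalue no larger, ``because replacing $A$ by $B$ only increases the operator'' — is false. The dependence on $A$ in $\tfrac12\tr\sigma\sigma^t(A+AD^2uA+(x+ADu)\otimes(x+ADu))$ is \emph{quadratic}, not monotone: $A\le B$ does not imply $A\sigma\sigma^t A\le B\sigma\sigma^t B$ (take $A=-I$, $B=0$), nor does it control $|\sigma^t(x+ADu)|^2$ versus $|\sigma^t(x+BDu)|^2$. Indeed this non-monotonicity of the operator in $A$ is precisely what makes the monotonicity of $\lambda$ itself nontrivial; the paper gets it indirectly by first proving the bounds in the present proposition, then separately proving $\lambda$ is convex, and finally observing that a convex function dominated by a nondecreasing function is nondecreasing. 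For the present proposition you only need the \emph{bounds} $\underline\lambda,\bar\lambda$ to be monotone, and the correct mechanism is the identity $\underline\lambda(A)=\tr\lambda_1(\sigma^t A\sigma,\,1/(|\sigma|\sqrt n))$, $\bar\lambda(A)=\tr\lambda_1(\sigma^t A\sigma,\,|\sigma^{-1}|\sqrt n)$, combined with the fact (proved in the paper's Appendix) that $A\mapsto\tr f(A)$ is nondecreasing on $\mathcal S(n)$ whenever $f:\R\to\R$ is nondecreasing, together with $A\le B\Rightarrow\sigma^t A\sigma\le\sigma^t B\sigma$. Your proposed functional forms $f(\tr\sigma\sigma^t A,\mu_{\min}(A))$ and $g(\tr\sigma\sigma^t A,|A|)$ are not what comes out of the construction — the bounds depend on the full spectrum of $\sigma^t A\sigma$ through the sum $\sum_i\lambda_1(a_i,\alpha)$, which is a nonlinear, symmetric function of the $a_i$'s and cannot be collapsed onto the trace and one eigenvalue.
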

 \begin{proof} 1. By appendix A of \cite{BS}, for each $a\in \R$ and $\alpha>0$, there is a unique $\lambda=\lambda_1(a,\alpha)\in\R$ such that the ODE
\begin{equation}\label{NewlamODEalpha}
\max\left\{\lambda - \frac{\sigma^2}{2}(a+a^2u'' + (x+au')^2), |u'|-\alpha\right\}=0, \quad x\in \R \nonumber
\end{equation}
has a solution $u=u_1(\cdot\; ; a, \alpha)\in C(\R)$ satisfying
\begin{equation}\label{ugrowthalph}
\lim_{|x|\rightarrow \infty}\frac{u(x)}{|x|}=\alpha.\nonumber
\end{equation}
When $a\neq 0$, $u_1(\cdot\; ; a, \alpha)\in C^2(\R)$. Moreover, the function $a\mapsto \lambda_1(a,\alpha)$ is continuous and monotone non-decreasing for each $\alpha >0.$  
\par 2. Now write $\sigma^t A\sigma=P\Lambda P^t$, where $PP^t=I_n$ and $\Lambda=\text{diag}(a_1, \dots, a_n)$. Next, define 
$$
\underline{\lambda}(A):=\sum^{n}_{i=1}\lambda_1(a_i, 1/|\sigma|\sqrt{n})
$$
and 
$$
\underline{u}(x;A):=\sum^{n}_{i=1}u_1(x\cdot \sigma Pe_i; a_i,1/|\sigma|\sqrt{n}).
$$
When $\det A\neq 0$, a direct computation shows $\underline{u}$ is a subsolution of equation \eqref{CorrectorPDE} with eigenvalue $\underline{\lambda}$; the general case 
then follows by straightforward limiting arguments and the stability of viscosity solutions under local uniform convergence. 
\par 3.  Likewise, we define 
$$
\overline{\lambda}(A):=\sum^{n}_{i=1}\lambda_1(a_i, |\sigma^{-1}|\sqrt{n})
$$
and 
$$
\overline{u}(x;A):=\sum^{n}_{i=1}u_1(x\cdot \sigma Pe_i; a_i,|\sigma^{-1}|\sqrt{n}).
$$
As above, $\overline{u}$ is a supersolution of equation \eqref{CorrectorPDE} with eigenvalue $\overline{\lambda}$ satisfying 
$$
\liminf_{|x|\rightarrow \infty}\frac{\overline{u}(x)}{\sum^n_{i=1}|x_i|}\ge 1. 
$$
The desired conclusion then follows from formulae \eqref{Newmaxform} and \eqref{Newminform}. %Proposition \ref{Newcomparison}. 
\end{proof} 
%}

\par Now we turn to the regularity properties of $\lambda$ and show $\lambda$ is convex and necessarily continuous. As mentioned, this fact will be 
used to show that $\lambda$ is monotone nondecreasing.

\begin{prop}
$\lambda: {\cal S}(n)\rightarrow \R$ is convex. 
\end{prop}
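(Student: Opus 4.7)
The plan is to extend the doubling-of-variables technique of the rigorous proof of Proposition \ref{Newcomparison} to three variables and three functions, exploiting the fact that the combination $C := tA + (1-t)B$ annihilates a natural rank-$n$ quadratic penalty. Fix $A, B \in \mathcal{S}(n)$ and $t \in (0,1)$, and let $u_A, u_B, u_C$ be the convex viscosity solutions of \eqref{CorrectorPDE} from Theorem \ref{firstmainthm} with eigenvalues $\lambda(A), \lambda(B), \lambda(C)$ satisfying \eqref{Newugrowth}; I aim to show $\lambda(C) \le t\lambda(A) + (1-t)\lambda(B)$. For parameters $\tau \in (0,1)$ and $\delta > 0$, I would consider
\[
\Psi^{\tau, \delta}(z, x, y) := \tau u_C(z) - t u_A(x) - (1-t) u_B(y) - \frac{1}{2\delta}|z - tx - (1-t)y|^2.
\]
The deficit $1 - \tau > 0$ combined with \eqref{Newugrowth} handles the direction $z = tx+(1-t)y$, while the penalty controls transverse directions, so $\Psi^{\tau, \delta}$ attains its maximum at some $(z_\tau, x_\tau, y_\tau)$.

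Next I would apply the multivariate Theorem of Sums (Theorem 3.2 of \cite{CIL}) to extract, for each $\rho > 0$, symmetric matrices $X_A, X_B, X_C$ and the common quantity $q := (tx_\tau + (1-t)y_\tau - z_\tau)/\delta$ so that $(-q/\tau, X_C) \in \overline{J}^{2,+}u_C(z_\tau)$, $(-q, X_A) \in \overline{J}^{2,-}u_A(x_\tau)$, $(-q, X_B) \in \overline{J}^{2,-}u_B(y_\tau)$, together with a block matrix inequality on $\operatorname{diag}(\tau X_C, -tX_A, -(1-t)X_B)$. The key observation is that the penalty's Hessian is the rank-$n$ object $\tfrac{1}{\delta} w w^T$ with $w = (I_n, -tI_n, -(1-t)I_n)^T$ in block form, so testing the matrix inequality against the special vector $(C\eta, A\eta, B\eta)$ gives $w^T(C\eta, A\eta, B\eta) = [C - tA - (1-t)B]\eta = 0$. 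Both the Hessian and its square annihilate this vector, yielding the crisp inequality
\[
\tau \, CX_CC \le tAX_AA + (1-t)BX_BB
\]
as symmetric matrices, independent of $\rho$.

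The subsolution property of $u_C$ forces $|q_i| \le \tau < 1$, activating the non-obstacle branch of the viscosity inequalities. Writing $F(M, p, X, x) := \tfrac{1}{2}\tr\sigma\sigma^t(M + MXM + (x+Mp)\otimes(x+Mp))$, one has $\lambda(C) \le F(C, -q/\tau, X_C, z_\tau)$, $\lambda(A) \ge F(A, -q, X_A, x_\tau)$, and $\lambda(B) \ge F(B, -q, X_B, y_\tau)$. Subtracting, the linear-in-$M$ contribution vanishes since $C = tA + (1-t)B$; the matrix-quadratic contribution is at most $\tfrac{1}{2}(1/\tau - 1)[t\tr\sigma\sigma^tAX_AA + (1-t)\tr\sigma\sigma^tBX_BB]$ by the matrix inequality above; and the gradient-quadratic contribution is controlled by writing $z_\tau - Cq/\tau = t(x_\tau - Aq) + (1-t)(y_\tau - Bq) + R$ with $|R| = O((1/\tau - 1) + \delta)$, then invoking the convexity of $|\sigma^t\cdot|^2$ and the expansion $|a+b|^2 \le (1+\eta)|a|^2 + (1+\eta^{-1})|b|^2$. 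The main obstacle is that $X_A, X_B, X_C$ need not be individually bounded as $\delta \to 0$; instead, I would use the supersolution inequalities themselves to obtain the combined upper bounds $\tfrac{1}{2}\tr\sigma\sigma^tAX_AA + \tfrac{1}{2}|\sigma^t(x_\tau - Aq)|^2 \le \lambda(A) - \tfrac{1}{2}\tr\sigma\sigma^tA$ and its analogue for $B$, which are exactly what is needed for the error terms to vanish when $\rho \to 0$, $\delta \to 0$, and $\tau \to 1^-$ are taken in the correct order.
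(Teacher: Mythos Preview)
Your argument is correct and follows the same overall strategy as the paper's proof—couple the three eigenfunctions via the barycentric constraint $z\approx tx+(1-t)y$, use the deficit $\tau<1$ for coercivity, and extract a second-order matrix inequality to control the $MXM$ terms—but your implementation differs from the paper's in two useful ways. First, the paper argues formally with the hard constraint $z=(x+y)/2$ (midpoint only) and leaves the rigorous tripling-of-variables argument as a remark, whereas you carry out the penalized version in full and for arbitrary $t\in(0,1)$. Second, and more interestingly, the paper obtains the key inequality
\[
\tr\!\big[A_3(\tau D^2u_3)A_3\big]\le \tfrac12\tr\!\big[A_1D^2u_1A_1\big]+\tfrac12\tr\!\big[A_2D^2u_2A_2\big]
\]
by invoking a standalone matrix lemma (Lemma \ref{MatrixLem} in Appendix \ref{SecMatrixLem}, stated under the hypothesis $b,c\ge 0$ and proved by nontrivial matrix manipulations). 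Your route is instead to observe that the Hessian of the penalty $\tfrac{1}{2\delta}|z-tx-(1-t)y|^2$ is rank $n$, and that the test vector $(C\eta,A\eta,B\eta)$ lies in its null space precisely because $C=tA+(1-t)B$; this yields $\tau\,CX_CC\le tAX_AA+(1-t)BX_BB$ directly from the Theorem of Sums, with no auxiliary lemma and no appeal to convexity of $u_A,u_B$. This null-space observation is cleaner and in fact shows that the sign hypothesis $b,c\ge 0$ in Lemma \ref{MatrixLem} is unnecessary for what the convexity proof actually uses. One point you could make more explicit: boundedness of the maximizers $(z_\tau,x_\tau,y_\tau)$ uniformly in $\delta\in(0,1)$ for fixed $\tau$ follows from the Lipschitz bound on $u_C$ together with the lower bounds of Corollary \ref{LowerBoundLemma} on $u_A,u_B$, and once that is in hand your handling of the error terms via the supersolution inequalities goes through as you describe.
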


\begin{proof} 1. Let $A_1, A_2\in {\cal S}(n)$ and set $A_3:=(A_1+A_2)/2$. We will first show 
$$
\lambda_3\le \frac{\lambda_1+\lambda_2}{2}.
$$
where $\lambda_i:=\lambda(A_i)$, $i=1,2,3.$  Let $u_i=u(\cdot; A_i)$ and assume $u_i\in C^2(\R^n)$; the general argument follows 
from standard viscosity solutions methods.  Finally, we also assume $\frac{1}{2}\sigma\sigma^t=I_n$.  A simple inspection of the reasoning below will convince the reader that 
this can be done without any loss of generality. 

\par Fix $\tau>0$. Note that the function 
$$
\R^n\times\R^n\ni (x_1,x_2)\mapsto \tau u_3(x_3)-\frac{u_1(x_1)+u_2(x_2)}{2},  \quad x_3:=(x_1+x_2)/2
$$
has a maximum on $\R^n\times\R^n$, by adapting the proof of Proposition \ref{PropConvexUdelta}. For simplicity, we denote this point by $(x_1,x_2)$ and suppress the $\tau$ dependence. As $(x_1,x_2)$ is a maximizer,
\begin{equation}\label{firstOrderConv}
\tau Du_3(x_3)=Du_1(x_1)=Du_2(x_2)
\end{equation}
and
$$
\left[
\begin{array}{cc}
\frac{\tau}{4}D^2u_3(x_3)-\frac{1}{2}D^2u_1(x_1) &\frac{\tau}{4}D^2u_3(x_3)\\
\frac{\tau}{4}D^2u_3(x_3)& \frac{\tau}{4}D^2u_3(x_3)-\frac{1}{2}D^2u_2(x_2)
\end{array}
\right]\le 0.
$$
The above matrix inequality implies that
$$
\frac{\tau}{4}D^2u_3(x_3)(\xi_1+\xi_2)\cdot (\xi_1+\xi_2)\le \frac{1}{2}D^2u_1(x_1)\xi_1\cdot \xi_1 +\frac{1}{2}D^2u_2(x_2)\xi_2\cdot \xi_2
$$
for each $\xi_1,\xi_2\in\R^n.$  As each $u_i$ is convex, an application of the Cauchy-Schwarz inequality for nonnegative-definite, symmetric matrices gives
\begin{equation}\label{ImportantConv}
\tau \tr\left[A_3 D^2u_3(x_3)A_3\right]\le \frac{1}{2}\tr\left[A_1D^2u_1(x_1)A_1\right]+\frac{1}{2}\tr\left[A_2D^2u_2(x_2)A_2\right].
\end{equation}

\par 2.  By \eqref{firstOrderConv},
$$
|\partial_{x_i}u_1(x_1)|=|\partial_{x_i}u_2(x_2)|=\tau |\partial_{x_i} u_3(x_3)|\le \tau<1,\quad i=1,\dots,n,
$$
and so
$$
\lambda_i +G(D^2u_i,Du_i,x;A_i)=0, \quad i=1,2.
$$
Therefore, using inequality \eqref{ImportantConv} and a bit of algebra provides 
\begin{align*}
\tau \lambda_3 -\frac{\lambda_1+\lambda_2}{2}
 &\le  (\tau-1)\tr A_3 +\tau|x_3|^2 -\frac{1}{2}|x_1|^2-\frac{1}{2}|x_2|^2 + \tau|A_3Du_3(x_3)|^2- \frac{1}{2}|A_1Du_1(x_1)|^2 \\
 &\quad - \frac{1}{2}|A_2Du_2(x_2)|^2 +2\tau x_3\cdot A_3Du_3(x_3)-x_1\cdot A_1Du_1(x_1)-x_2\cdot A_2Du_2(x_2). \\
\end{align*}
We also have
$$
\tau|x_3|^2 -\frac{1}{2}|x_1|^2-\frac{1}{2}|x_2|^2=(\tau-1)|x_3|^2 -\left|\frac{x_1-x_2}{2}\right|^2,
$$
and using the first order conditions \eqref{firstOrderConv} 
\begin{align*}
 \tau|A_3Du_3(x_3)|^2- \frac{1}{2}|A_1Du_1(x_1)|^2- \frac{1}{2}|A_2Du_2(x_2)|^2 & = \tau(1-\tau)\frac{|A_1 Du_3(x_3)|^2+|A_2 Du_3(x_3)|^2}{2}\\
 & \quad -\left| \tau\left(\frac{A_1-A_2}{2}\right)Du_3(x_3)\right|^2
\end{align*}
and 
\begin{align*}
2\tau x_3\cdot A_3Du_3(x_3)-x_1\cdot A_1Du_1(x_1)-x_2\cdot A_2Du_2(x_2)&=\left(\frac{A_1-A_2}{2}\right)\left(\frac{x_2-x_1}{2}\right)\cdot \tau Du_3(x_3)\\
&\le \left| \tau\left(\frac{A_1-A_2}{2}\right)Du_3(x_3)\right|^2+\left|\frac{x_1-x_2}{2}\right|^2.
\end{align*}
Combining the previous four inequalities lead us to
$$
\tau\lambda_3 -\frac{\lambda_1+\lambda_2}{2}\le (\tau-1)\tr A_3 +  (\tau-1)|x_3|^2+\tau(1-\tau)\frac{|A_1 Du_3(x_3)|^2+|A_2 Du_3(x_3)|^2}{2}\le C(1-\tau)
$$
for some universal constant $C$. We conclude by letting $\tau\rightarrow 1^-.$

\par 3. Finally, we remark that virtually the same steps can be used to show 
$$
\lambda(s A+(1-s)B)\le s \lambda(A)+(1-s)\lambda(B),
$$
for $A,B\in {\cal S}(n)$ and $0\le s \le 1.$ Therefore, the argument above which shows that $\lambda$ is midpoint convex also can be used to show $\lambda$ is 
convex. 
\end{proof}

\begin{cor}
$\lambda: {\cal S}(n)\rightarrow \R$ is continuous. 
\end{cor}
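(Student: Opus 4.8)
The plan is to deduce continuity of $\lambda$ from the convexity just established, using only the general fact that a finite-valued convex function on a finite-dimensional vector space is continuous. First I would observe that ${\cal S}(n)$ is a finite-dimensional real vector space (of dimension $N := n(n+1)/2$) and that, by Theorem \ref{firstmainthm}, $\lambda(A)\in\R$ is well defined for \emph{every} $A\in{\cal S}(n)$; equivalently, the effective domain of the convex function $\lambda$ is all of ${\cal S}(n)$, which is open. The classical result from convex analysis then applies: a convex function that is finite on an open subset of a finite-dimensional space is locally Lipschitz there, hence continuous. This yields the corollary at once.

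If one prefers to avoid quoting that result, the same conclusion can be reached by hand. Fix $A_0\in{\cal S}(n)$ and choose an affinely independent collection $B_0,\dots,B_N\in{\cal S}(n)$ whose convex hull $\Delta$ contains a neighborhood of $A_0$. Convexity (the preceding proposition) forces $\lambda\le M$ on $\Delta$, where $M:=\max_{0\le i\le N}\lambda(B_i)$. For a matching lower bound, reflect through $A_0$: if $A$ is close enough to $A_0$ that $2A_0-A\in\Delta$, then $A_0=\tfrac12 A+\tfrac12(2A_0-A)$ gives $\lambda(A_0)\le\tfrac12\lambda(A)+\tfrac12 M$, i.e.\ $\lambda(A)\ge 2\lambda(A_0)-M$. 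Hence $\lambda$ is bounded on a neighborhood of $A_0$, and a convex function that is locally bounded on an open set is locally Lipschitz; continuity follows. Alternatively, the two-sided bound $\underline{\lambda}\le\lambda\le\overline{\lambda}$ from Proposition \ref{LamMonBounds}, together with the continuity of $\underline{\lambda},\overline{\lambda}$ coming from Lemma \ref{Lam1U1}(ii) and Proposition \ref{AppProp}, already supplies the required local boundedness directly.

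There is essentially no obstacle here; the only point requiring a little care is invoking the correct form of the convex-analysis lemma, namely that finiteness (or local boundedness) on an \emph{open} set upgrades a convex function to a locally Lipschitz, and in particular continuous, one. Everything else is routine.
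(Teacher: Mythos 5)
Your proposal is correct and takes the same route the paper intends: the paper states this corollary without proof immediately after establishing convexity, precisely because a convex function that is finite (equivalently, real-valued) on all of the finite-dimensional space ${\cal S}(n)$ is automatically locally Lipschitz and hence continuous. Your elaboration via affinely independent points, and the alternative appeal to the two-sided bounds $\underline{\lambda}\le\lambda\le\overline{\lambda}$, are both valid instantiations of the same standard argument.
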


\subsection{Symmetry, monotonicity, and min-max formulae}
We have shown that $\lambda$ is convex. In order to complete the proof of Theorem \ref{lampropthm},  we need prove assertions $(i)$, $(iii)$ and $(iv)$. To this end, 
we shall make use of the convexity of $\lambda$, the formulae \eqref{Newmaxform} and \eqref{Newminform} and the regularity of solutions of equation
\eqref{CorrectorPDE}.  

\begin{proof}( Theorem \ref{lampropthm} $(i)$)
Let $A\in {\cal S}(n)$ and $u(\cdot; A)$ be a solution of \eqref{CorrectorPDE} with eigenvalue $\lambda(A).$ Direct computation has that $v(x):=u(U^t x; A)$ is a viscosity solution of the PDE
$$
\max_{1\le i\le n}\left\{\lambda(A)+G(D^2v,Dv,x;UAU^t), |v_{x_i}|-1\right\}=0, \quad x\in \R^n
$$
that satisfies the usual growth condition \eqref{Newugrowth} for any permutation matrix $U$. The key observation here is that $v_{x_i}=Du(U^tx)\cdot U^t e_i$ and $U^t$ permutes the standard basis vectors.  By Proposition \ref{Newcomparison}, we have $\lambda(A)=\lambda(UAU^t).$
\end{proof}
% Monotonicity 

\begin{proof} (Theorem \ref{lampropthm} $(iii)$)
It suffices to verify the general assertion: if $f,g: {\cal S}(n)\rightarrow \R$ with $g$ nondecreasing, $f$ convex, and $f\le g$, then $f$ is nondecreasing. In our case, $f=\lambda$ and $g=\overline{\lambda}$ from Proposition \ref{LamMonBounds}.

\par Suppose that $Q\in \partial f(A_0)\neq \emptyset$; that is, $Q\in {\cal S}(n)$ and
\begin{equation}\label{subdiffA}
Q\cdot (A-A_0)+f(A_0)\le f(A), \quad A\in {\cal S}(n).
\end{equation}
We claim $Q\ge 0$. To see this, let $\xi \in \R^n$ and set
$$
A:=A_0-t\xi\otimes\xi 
$$
for  $t>0$.  As $g$ is nondecreasing, substituting this $A$ in \eqref{subdiffA} gives
$$
-tQ\xi\cdot \xi + f(A_0)\le f(A_0-t\xi\otimes\xi )\le g(A_0-t\xi\otimes\xi)\le g(A_0).
$$
Clearly this inequality holds for all $t>0$ if and only if $Q\xi\cdot \xi\ge 0$. As a result, $Q\ge 0$ and thus $f$ is nondecreasing.  
\end{proof}

% Minmax
\begin{proof}(of Theorem \ref{lampropthm} $(iv)$) 1. Fix $A\in {\mathcal S}(n)$, let $\phi\in C^2$ and suppose that $\max_i|\phi_{x_i}|\le 1$. Now set 
$$
\mu^\phi(A):=\inf_{x\in \R^n}(-G(D^2\phi(x), D\phi(x),x;A)).
$$
If $\mu^\phi(A)=-\I$, then $\mu^\phi(A)\le \lambda(A);$ if $\mu^\phi(A)>-\I$, by the assumptions on $\phi$ and the definition of $\mu^\phi(A)$ 
$$
\max_{1\le i\le n}\left\{\mu^\phi(A)+G(D^2\phi(x), D\phi(x),x;A), |\phi_{x_i}|-1 \right\}\le 0.%\; x\in \R^n.
$$
By \eqref{Newmaxform}, we still have $\mu^\phi(A)\le \lambda(A).$ Thus, $\lambda_-(A)=\sup_\phi\mu^\phi(A)\le \lambda(A).$

\par  2. Again fix $A\in {\mathcal S}(n)$. Now let $\psi\in C^2$ satisfy $\liminf_{|x|\rightarrow }\psi(x)/\sum^n_{i=1}|x_i|\ge 1$ and set 
$$
\tau^\psi(A):=\sup_{\max_i|\psi_{x_i}(x)|<1}(-G(D^2\psi(x), D\psi(x),x;A)).
$$
If $\tau^\psi(A)=+\I$, then $\tau^\psi(A)\ge \lambda(A);$ if $\tau^\psi(A)<+\I$, by the assumptions on $\psi$ and the definition of $\tau^\psi(A)$ 
$$
\max\left\{\tau^\psi(A)+G(D^2\psi(x), D\psi(x),x;A), |\psi_{x_i}|-1 \right\}\ge 0.
$$
By \eqref{Newminform}, we still have $\tau^\psi(A)\ge \lambda(A).$ Hence, $\lambda_+(A)=\inf_{\psi}\tau^\psi(A)\ge \lambda(A).$
 
\par 3. Suppose that $\det A\neq 0$ and let $u=u(\cdot, A)$ be a convex solution of \eqref{CorrectorPDE} associated to $\lambda(A)$ that satisfies $Du\in C^\alpha_{\text{loc}}(\R^n)\cap L^\infty(\R^n)$ for each $\alpha\in(0,1)$. We first claim 
that $\lambda(A)\le \lambda_-(A)$.  To see this we mollify $u$, $u^\epsilon:=\eta^\epsilon*u$
(see Appendix C of \cite{Evans} for more on mollification).  A straightforward computation implies
$$
\eta^\epsilon*(x\mapsto |\sigma^t(x+ADu(x))|^2)=|\sigma^t(x+ADu^\epsilon)|^2 + O(\epsilon^\alpha)
$$
as $\epsilon\rightarrow 0,$ for $x$ belonging to bounded subdomains of $\R^n.$ Therefore, as $u$ solves the PDE \eqref{CorrectorPDE} almost everywhere on $\R^n$ 
\begin{equation}\label{lamdalpha}
\lambda(A)+G(D^2u^\epsilon,Du^\epsilon,x;A)\le O(\epsilon^\alpha)
\end{equation}
for $x$ belonging to bounded subdomains of $\R^n.$  Furthermore, the convexity of $u^\epsilon$ and uniform boundedness of $|Du^\epsilon|_{L^\infty(\R^n)}$ imply that 
\eqref{lamdalpha} actually holds for all $x\in \R^n$.  Consequently, 
\begin{equation}
\lambda_-(A)\ge  \inf_{\R^n}(-G(D^2u^\epsilon,Du^\epsilon,x;A))\ge\lambda(A)+O(\epsilon^\alpha).
\end{equation}

\par 4. Next, we claim that $\lambda(A)\ge \lambda_+(A)$.   An important observation for us will be that $\max_{1\le i\le n}|u_{x_i}|$ is uniformly continuous on $\R^n$. This is due to
$$
\lim_{|x|\rightarrow \infty}\max_{1\le i\le n}|u_{x_i}(x)|=1,
$$
which in turn follows from the limit \eqref{Newugrowth} and the fact that $u$ is convex. It now follows that $\max_{1\le i\le n}|u^\epsilon_{x_i}|$ converges to 
$\max_{1\le i\le n}|u_{x_i}|$ uniformly on $\R^n$.

\par Set
$$
u^{\epsilon,\delta}:=(1+\delta)u^\epsilon,
$$
where $\delta>0$ is fixed, and notice that 
$$
\max_{1\le i\le n}|u_{x_i}^{\epsilon,\delta}(x)|<1 \Leftrightarrow \max_{1\le i\le n}|u_{x_i}^\epsilon(x)|<\frac{1}{1+\delta}.
$$
As $1/(1+\delta)<1$, there is $\rho=\rho(\delta)>0$ so small such that 
$$
\gamma:=\frac{1}{1+\delta}+\rho<1.
$$
Also, for $\epsilon_0=\epsilon_0(\delta)>0$ small enough
$$
\max_{1\le i\le n}|u_{x_i}(x)|\le \max_{1\le i\le n}|u_{x_i}^\epsilon(x)| + \rho< \frac{1}{1+\delta}+\rho=\gamma,
$$
provided $0<\epsilon<\epsilon_0$  and $x$ satisfies $\max_{1\le i\le n}|u_{x_i}^{\epsilon,\delta}(x)|<1$.  Moreover, there is $\epsilon_1=\epsilon_1(\delta)$ such that 
$$
\{x\in \R^n: \max_{1\le i\le n}|u_{x_i}(x)|<\gamma\}\subset \Omega_\epsilon:=\{x\in \Omega: \text{dist}(x,\partial\Omega)>\epsilon\}
$$
for $0<\epsilon<\epsilon_1$.  This inclusion follows as the set $\{x\in \R^n: \max_{1\le i\le n}|u_{x_i}(x)|<\gamma\}$ is an open subset of the open set $\Omega$ (Corollary \ref{OmegaCor}).  

\par Hence for $0<\epsilon<\min\{\epsilon_0,\epsilon_1\}$, we have 
$$
\left\{x\in\R^n: \max_{1\le i\le n}|u_{x_i}^{\epsilon,\delta}(x)|<1\right\}\subset \Omega_\epsilon.
$$
In particular, if $\max_{1\le i\le n}|u_{x_i}^{\epsilon,\delta}(x)|<1$, then 
$$
\lambda(A)+G(D^2u^{\epsilon},Du^{\epsilon},x;A)=O(\epsilon^\alpha)
$$
as $\lambda(A)+G(D^2u,Du,x;A)=0$,  a.e. on $\Omega$.

\par With the above computations, and the fact that $u\in C^\infty(\Omega)$ gives 
\begin{align*}
\lambda_+(A)& 
\le \sup_{\Omega_\epsilon}(-G(D^2u^{\epsilon},Du^{\epsilon},x;A)) + O(\delta)\le \lambda(A) +O(\epsilon^\alpha) + O(\delta)
\end{align*}
for $\epsilon\in (0,\min\{\epsilon_0,\epsilon_1\})$. We conclude by first sending $\epsilon\rightarrow 0^+$ and then $\delta\rightarrow 0^+$.
\end{proof}

%%%%%%%%%%%%%%%%%%%%%%%%%%%%%%%%%%%%%%%%%%%%%%%%%%%%%%%
\section{Convergence}\label{zConverge}
In this section, we verify Theorem \ref{lastmainthm} which characterizes $\lim_{\epsilon\rightarrow 0^+}z^\epsilon$ as a solution of the 
nonlinear diffusion equation 
\begin{equation}\label{SecPsiEq}
\psi_t+\lambda(d(p)D^2\psi d(p))=0, \quad (t,p)\in (0,T)\times (0,\infty)^n.
\end{equation} 
Here, $\lambda: {\cal S}(n)\rightarrow \R$ is of course the solution of the eigenvalue problem discussed in previous sections. The method of proof is relatively standard in the theory of viscosity solutions and goes as follows. We show the upper limit
$$
\overline{z}(t,p,y):=\limsup_{\substack{\epsilon\rightarrow 0^+\\ (t',p',y')\rightarrow (t,p,y)}}z^\epsilon(t',p',y')
$$
is a viscosity subsolution of \eqref{SecPsiEq} and the lower limit 
$$
\underline{z}(t,p,y):=\liminf_{\substack{\epsilon\rightarrow 0^+\\ (t',p',y')\rightarrow (t,p,y)}}z^\epsilon(t',p',y')
$$
is a viscosity supersolution of \eqref{SecPsiEq}.  

\par As $\overline{z}$ and $\underline{z}$ agree at time $t=T$ and satisfy natural growth estimates for large values of $p$ (see Lemma \ref{TechnicalLemma}), we will be able to conclude
$$
\overline{z}\le \underline{z}.
$$
Combined with the definitions above, we will also have $\overline{z}=\underline{z}=:\psi$ and that $z^\epsilon\rightarrow \psi$ locally uniformly as $\epsilon\rightarrow 0$  (Remark 6.2 in \cite{CIL}).
First, let us make a basic observation.

\begin{lem}
$\overline{z}$ and $\underline{z}$ are independent of $y.$
\end{lem}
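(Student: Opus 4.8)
The plan is to exploit the gradient constraint $|z^\epsilon_{y_i}| \le \sqrt{\epsilon}\,p_i$ encoded in \eqref{zEq}: this coefficient degenerates as $\epsilon\to 0^+$, so the dependence of $z^\epsilon$ on $y$ should disappear in the limit.

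The first step is to record a Lipschitz estimate of $z^\epsilon$ in $y$ with constant $O(\sqrt{\epsilon})$ (locally uniformly in $p$). Since $z^\epsilon$ is a viscosity subsolution of \eqref{zEq}, it is in particular a viscosity subsolution of $|z_{y_i}| - \sqrt{\epsilon}\,p_i \le 0$ for each $i=1,\dots,n$: if $\varphi\in C^2$ and $z^\epsilon-\varphi$ has a local maximum at $(t_0,p_0,y_0)$, then every term inside the maximum in \eqref{zEq} is $\le 0$ there, and in particular $|\varphi_{y_i}(t_0,p_0,y_0)|\le\sqrt{\epsilon}\,(p_0)_i$. Freezing $t$, $p$, and the coordinates $y_j$ with $j\ne i$, the one-variable function $s\mapsto z^\epsilon(t,p,y_1,\dots,s,\dots,y_n)$ is then a continuous viscosity subsolution of $|w'|\le\sqrt{\epsilon}\,p_i$ on $\R$, hence Lipschitz with constant $\sqrt{\epsilon}\,p_i$; applying this coordinate by coordinate yields
$$
|z^\epsilon(t,p,y) - z^\epsilon(t,p,\tilde y)| \;\le\; \sqrt{\epsilon}\sum_{i=1}^n p_i\,|y_i-\tilde y_i|, \qquad y,\tilde y\in\R^n,
$$
for every $(t,p)\in(0,T]\times(0,\infty)^n$.

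The second step is to push this through the half-relaxed limits. Given $(t,p,y)$ and an arbitrary $\tilde y\in\R^n$, I would pick $\epsilon_k\to 0^+$ and $(t_k,p_k,y_k)\to(t,p,y)$ realizing $\overline{z}(t,p,y)=\lim_k z^{\epsilon_k}(t_k,p_k,y_k)$, and set $\tilde y_k:=y_k+(\tilde y-y)\to\tilde y$. Since $(p_k)$ is bounded, the estimate above gives $|z^{\epsilon_k}(t_k,p_k,y_k)-z^{\epsilon_k}(t_k,p_k,\tilde y_k)|\le\sqrt{\epsilon_k}\sum_i(p_k)_i|\tilde y_i-y_i|\to 0$, so $z^{\epsilon_k}(t_k,p_k,\tilde y_k)\to\overline{z}(t,p,y)$ as well; as $(t_k,p_k,\tilde y_k)\to(t,p,\tilde y)$, the definition of $\overline{z}$ forces $\overline{z}(t,p,\tilde y)\ge\overline{z}(t,p,y)$, and interchanging $y$ with $\tilde y$ gives equality. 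Repeating the argument verbatim with $\liminf$ in place of $\limsup$ handles $\underline{z}$.

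The one genuinely technical point is the implication in the first step from the viscosity inequality $|z_{y_i}|\le\sqrt{\epsilon}\,p_i$ to an honest Lipschitz bound on $z^\epsilon$ in the $y$-variable; this is a standard one-dimensional fact about viscosity subsolutions of eikonal-type inequalities (one may, e.g., compare $z^\epsilon$ restricted to a segment with affine functions, or invoke \cite{BC}), and everything else is routine once it is in hand.
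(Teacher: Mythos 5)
Your proof is correct and follows essentially the same route as the paper's: establish the $O(\sqrt{\epsilon})$-Lipschitz bound of $z^\epsilon$ in $y$ from the eikonal constraint in \eqref{zEq}, then let this Lipschitz constant vanish through the half-relaxed limits. The only cosmetic differences are that you flesh out the derivation of the Lipschitz bound from the viscosity inequality and you translate a realizing sequence explicitly, whereas the paper compresses the limiting step into a single chain of $\limsup$ inequalities and handles $\underline{z}$ via $-\underline{z}=\limsup(-z^\epsilon)$.
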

\begin{proof} 1. As $|z^\epsilon_{y_i}|\le \sqrt{\epsilon}p_i$ for $i=1,\dots,n$ in the sense of viscosity solutions,
\begin{equation}\label{LipschitZeps}
|z^\epsilon(t,p,y_1)-z^\epsilon(t,p,y_2)|\le \sum^n_{i=1}\sqrt{\epsilon}p_i|(y_1-y_2)\cdot e_i|, \quad (t,p)\in (0,T)\times (0,\infty)^n, \; y_1,y_2\in \R^n.
\end{equation}\
Therefore, for $y_1,y_2\in \R^n$ 
\begin{align*}
\overline{z}(t,p,y_1)-\overline{z}(t,p,y_2)%&=\limsup_{\substack{\epsilon\rightarrow 0^+\\ (t',p',y'_1)\rightarrow (t,p,y_1)}}z^\epsilon(t',p',y'_1)-\limsup_{\substack{\epsilon\rightarrow 0^+\\ (t',p',y'_2)\rightarrow (t,p,y_2)}}z^\epsilon(t',p',y'_2)\\
&\le \limsup_{\substack{\epsilon\rightarrow 0^+\\ (t',p',y'_1,y'_2)\rightarrow (t,p,y_1,y_2)}}\left\{z^\epsilon(t',p',y'_1)-z^\epsilon(t',p',y'_2)\right\}\\
&\le \limsup_{\substack{\epsilon\rightarrow 0^+\\ (t',p',y'_1,y'_2)\rightarrow (t,p,y_1,y_2)}}\left\{\sum^n_{i=1}\sqrt{\epsilon}p_i|(y_1'-y_2')\cdot e_i|\right\}&=0.
\end{align*}
Hence, $\overline{z}$ is independent of $y$. 
\par 2. As 
$$
(-\underline{z})(t,p,y):=\limsup_{\substack{\epsilon\rightarrow 0^+\\ (t',p',y')\rightarrow (t,p,y)}}(-z^\epsilon)(t',p',y')
$$
and $-z^\epsilon$ also satisfies \eqref{LipschitZeps}, we may similarly conclude that $\underline{z}$ is independent of $y$. 
\end{proof}
% JUSTIFY
We are finally in position to prove Theorem \ref{lastmainthm}.  The technique we will use, known as the {\it perturbed test function method}, is due to Evans \cite{LCE} and was first applied to this framework by Barles and 
Soner \cite{BS}.  One difference with the option pricing problem in several assets is that we must work with nonsmooth
``correctors" i.e. viscosity solutions $u$ of equation \eqref{CorrectorPDE}.   We will employ a smoothing argument to overcome this difficulty.

% HERE
\begin{proof} (of Theorem \ref{lastmainthm}) 
1. We first show that $\underline{z}$ is supersolution of \eqref{SecPsiEq}. Assume that $\underline{z} - \phi$ has a local minimum at some point $(t_0,p_0)\in (0,T)\times (0,\infty)^n$ and $\phi\in C^\I$; for definiteness, we suppose 
that
$$
(\underline{z} - \phi)(t,p)\ge (\underline{z} - \phi)(t_0,p_0), \quad (p_0,t_0)\in \overline{B_\tau}(p_0,t_0)
$$
for some ball $\overline{B_\tau}(p_0,t_0)\subset (0,\infty)^n\times (0,T)$. We must show 
\begin{equation}\label{psisuper}
-\phi_t(t_0,p_0)- \lambda(d(p_0)D^2\phi(t_0,p_0)d(p_0))\ge 0.
\end{equation}
By subtracting $(t,p)\mapsto\eta(|t-t_0|^2+|p-p_0|^2)$ from $\phi$ and later sending $\eta\rightarrow 0^+ $, we may assume that $(t_0,p_0)$ is a {\it strict} local minimum point for $\underline{z}-\phi$ in $\overline{B_\tau}(t_0,p_0)$ and also that 
\begin{equation}\label{detD2phineq0}
\det D^2\phi(t_0,p_0)\neq 0. \nonumber
\end{equation}
We fix $\delta\in (0,1)$, and set 
$$
\begin{cases}
A^\delta(t,p):=(1-\delta)^2d(p)D^2\phi(t,p)d(p)\\ 
A_0:=A^\delta(t_0,p_0) \\
x^{\epsilon,\delta}(t,p,y):=(1-\delta)d(p)\frac{D\phi(t,p)-y}{\sqrt{\epsilon}}\\
\phi^{\epsilon,\delta,\rho}(t,p,y):=\phi(t,p) + \epsilon u^\rho\left(x^{\epsilon,\delta}(t,p,y)\right)
\end{cases}
$$
for $(t,p,y)\in (0,T)\times (0,\infty)^n\times \R^n.$  Here $u^\rho:=\eta^\rho*u$ is the standard mollification of $u=u(\cdot;A_0)$, where $u$ is a convex solution of \eqref{CorrectorPDE} with eigenvalue $\lambda(A_0)$ that satisfies \eqref{Newugrowth} and $u\in C^{1,\alpha}_\text{loc}(\R^n)$
for any $0<\alpha<1$.

\par 2. We claim there is a sequence of positive numbers $\epsilon_k\rightarrow 0$ and local minimizers $(t_k,p_k,y_k)\in \overline{B_\tau}(t_0,p_0)\times\R^n$ of $z^{\epsilon_k}-\phi^{\epsilon_k,\delta,\rho}$ such that $(t_k,p_k)\rightarrow (t_0, p_0),$ as $k\rightarrow \infty.$  We will use the idea presented in appendix of \cite{BP} to prove this. 

Let $y_0\in \R^n$ be given and select a sequence $\epsilon_k\rightarrow 0$ and $(t'_k,p'_k,y'_k) \rightarrow (t_0,p_0,y_0)$ as $k\rightarrow \infty$ such that 
$$
(z^{\epsilon_k}-\phi^{\epsilon_k,\delta,\rho})(t'_k,p'_k,y'_k)\rightarrow (\underline{z}-\phi)(t_0,p_0)
$$
(recall $\underline{z}$ is independent of the $y$ variable). By estimate \eqref{zgrowth3} below (see Lemma \ref{TechnicalLemma}),
\begin{equation}\label{whereIusedzgrowth}
\liminf_{|y|\rightarrow \infty}\frac{(z^{\epsilon}-\phi^{\epsilon,\delta,\rho})}{\sum^n_{i=1}\sqrt{\epsilon}p_i|y_i|}\ge \delta >0
\end{equation}
locally uniformly in $(t,p)\in (0,T)\times (0,\infty)^n$ {\it and} all $\epsilon$ sufficiently small. Thus, $z^{\epsilon_k}-\phi^{\epsilon_k,\delta,\rho}$ has a minimum at some 
$$
(t_k,p_k,y_k)\in \overline{B_\tau}(t_0,p_0)\times\R^n
$$
for all $k$ sufficiently large.  Moreover, it must be that $y_k$ is a bounded sequence for if not then \eqref{whereIusedzgrowth} implies 
$$
(z^{\epsilon_k}-\phi^{\epsilon_k,\delta,\rho})(t_k,p_k,y_k)\rightarrow +\infty
$$
while
$$
(z^{\epsilon_k}-\phi^{\epsilon_k,\delta,\rho})(t_k,p_k,y_k)\le (z^{\epsilon_k}-\phi^{\epsilon_k,\delta,\rho})(t'_k,p'_k,y'_k)
$$
and the right hand side above is bounded from above.

\par Without loss of generality, we assume that $(t_k,p_k,y_k)\rightarrow (t_1,p_1,y_1)$, as $k\rightarrow \infty$.
Notice that 
\begin{align*}
(\underline{z}-\phi)(t_1,p_1)&\le \liminf_{k\rightarrow \infty}(z^{\epsilon_k}-\phi^{\epsilon_k,\delta,\rho})(t_k,p_k,y_k)\le \liminf_{k\rightarrow \infty}(z^{\epsilon_k}-\phi^{\epsilon_k,\delta,\rho})(t'_k,p'_k,y'_k)=(\underline{z}-\phi)(t_0,p_0).
\end{align*}
As $(t_1,p_1)\in \overline{B_\tau}(t_0,p_0)$, it must be that $(t_1,p_1)=(t_0,p_0)$.

\par 3. We have at the point $(t_k, p_k, y_k)$
\begin{eqnarray}
|\phi_{y_i}^{\epsilon_k,\delta,\rho}|& =  &|(1-\delta)\sqrt{\epsilon_k}(p_k\cdot e_i)u^\rho_{x_i}(x^{\epsilon_k,\delta})|\le (1-\delta)\sqrt{\epsilon_k}(p_k\cdot e_i)< \sqrt{\epsilon_k}(p_k\cdot e_i)\nonumber
\end{eqnarray}
for $i=1,\dots,n$. Since $z^{\epsilon_k}$ is a viscosity solution of \eqref{zEq} and $\phi^{\epsilon_k,\delta,\rho}\in C^2((0,T)\times (0,\infty)^n\times\R^n)$,  we compute as in the introduction of this paper to get
\begin{align}\label{KeySupSolnIneq}
0&\le  -\phi^{\epsilon_k,\delta,\rho}_t -\frac{1}{2}\tr\left(d(p_k)\sigma\sigma^t d(p_k)\left(D^2_p \phi^{\epsilon_k,\delta,\rho} + \frac{1}{\epsilon_k}(D_p \phi^{\epsilon_k,\delta,\rho}-y_k)\otimes (D_p \phi^{\epsilon_k,\delta,\rho}-y_k) \right)\right) \nonumber\\
 &\le - \phi_t(t_k,p_k) - 
 \frac{1}{(1-\delta)^2}G(D^2u^\rho(x^{\epsilon_k,\delta}),Du^\rho(x^{\epsilon_k,\delta}),x^{\epsilon_k,\delta}; A_0)+ o(1). 
  \end{align}

\par Recall that from \eqref{lamdalpha}
\begin{equation}\label{KeySupSolnIneq2}
\lambda(A_0) +G(D^2u^\rho,Du^\rho,x; A_0)\le O(\rho^\alpha)
\end{equation}
fo all $x\in \R^n$. In particular, \eqref{KeySupSolnIneq2} and \eqref{KeySupSolnIneq} together imply
\begin{align*}
 0& \le-\phi_t(t_k,p_k) - \frac{1}{(1-\delta)^2}\lambda\left(A_0\right)+o(1)+O(\rho^\alpha)  \nonumber\\
&=-\phi_t(t_0,p_0) - \frac{1}{(1-\delta)^2}\lambda\left((1-\delta)^2d(p_0)D^2\phi(t_0,p_0)d(p_0)\right)+o(1)+O(\rho^\alpha) . \nonumber
\end{align*}
We obtain \eqref{psisuper} by letting $k\rightarrow \infty$ and then $ \delta,\rho\rightarrow 0^+$.

\par 4. We can argue analogously to conclude that $\overline{z}$ is subsolution of \eqref{SecPsiEq}; moreover, this argument is a bit easier than above as it turns one does not need to smooth the 
corrector function $u$. We leave the details to the interested reader.

 \par 5. In order to conclude the proof, we need to argue that $\overline{z}\le \underline{z}$.  Direct computation shows that the function 
 $$
z^{\eta}(t,p):=\overline{z}(t,p) - \eta\left(\frac{1}{t}+ \sum^n_{i=1}p_i\right)
 $$
 is a subsolution of \eqref{SecPsiEq} for each $\eta>0$. By Lemma \ref{TechnicalLemma} below, we have if $g$ satisfies \eqref{gEstimate} then
 $$
\varphi\le \underline{z}\le \overline{z}\le L,
 $$
 and if $g$ satisfies \eqref{gEstimate2} then
 \begin{equation}\label{IneqForgEst}
 \varphi\le \underline{z}\le \overline{z}\le L \sum^n_{i=1}p_i.
\end{equation}
Here $\varphi$ is the Black-Scholes price (that satisfies the PDE \eqref{OriginalBS}) and is given by 
$$
\varphi(p)=\int_{\R^n}g\left(p_1 e^{\sqrt{t}\sigma^te_1\cdot z - \frac{1}{2}|\sigma^t e_1|^2t},\dots, p_n e^{\sqrt{t}\sigma^te_n\cdot z- \frac{1}{2}|\sigma^t e_n|^2t}\right)\frac{e^{-|z|^2/2}}{(2\pi)^{n/2}}dz.
$$
When $g$ satisfies \eqref{gEstimate2}, the explicit formula above with inequality \eqref{IneqForgEst} gives
 $$
L= \lim_{|p|\rightarrow\infty}\frac{\varphi(p)}{\sum^n_{i=1}p_i}\le  \lim_{|p|\rightarrow\infty}\frac{\underline{z}(t,p)}{\sum^n_{i=1}p_i}\le  \lim_{|p|\rightarrow\infty}\frac{\overline{z}(t,p)}{\sum^n_{i=1}p_i}\le L.
 $$
Thus, when $g$ satisfies either  \eqref{gEstimate} or \eqref{gEstimate2},
$$
\lim_{|p|\rightarrow\infty}\frac{(z^\eta -\underline{z})(t,p)}{\sum^n_{i=1}p_i}=-\eta.
$$
It follows that $z^\eta -\underline{z}$ has a maximum at some $(t_0,p_0)\in (0,T]\times [0,\infty)^n.$

\par If $t_0=T$, then 
$$
\overline{z}\le \underline{z} + \eta\left(\frac{1}{T}+ \sum^n_{i=1}p_i\right).
$$ 
Letting $\eta\rightarrow 0^+$ leads to the desired inequality, $\overline{z}\le \underline{z}$. Now suppose $t_0<T$ and, for now, that $\overline{z},\underline{z}$ are smooth.  From 
calculus,
$$
\begin{cases}
z^\eta_t(t_0,p_0)=\underline{z}_t(t_0,p_0)\\
d(p_0)D^2z^\eta(t_0,p_0)d(p_0)\le d(p_0)D^2\underline{z}(t_0,p_0)d(p_0)
\end{cases}.
$$
However, these inequalities would imply a contradiction as
\begin{align*}
\frac{\eta}{t_0^2}&=-\overline{z}_t(t_0,p_0)+\underline{z}_t(t_0,p_0)
\le \lambda(d(p_0)D^2\overline{z}(t_0,p_0)d(p_0))-\lambda(d(p_0)D^2\underline{z}(t_0,p_0)d(p_0))\\
&= \lambda(d(p_0)D^2z^\eta(t_0,p_0)d(p_0))-\lambda(d(p_0)D^2\underline{z}(t_0,p_0)d(p_0))\le 0.
\end{align*}
The last inequality above is due to the monotonicity of $\lambda$.  It is now routine to use the ideas in Section 8 of  \cite{CIL} to make the same conclusion without assuming $\overline{z},\underline{z}$ are smooth.  
\end{proof}
\begin{lem}\label{TechnicalLemma} Let $z^\epsilon$ be the solution of \eqref{zEq} described in Proposition \ref{zSolnProp}. \\
(i) Then
\begin{equation}\label{zgrowth1}
\varphi(t,p)\le z^\epsilon(t,p,y)\le L + \sum^n_{i=1}\sqrt{\epsilon}p_i|y_i-L|, \quad (t,p,y)\in (0,T]\times (0,\infty)^n\times\R^n,
\end{equation}
provided $g$ satisfies \eqref{gEstimate} or 
\begin{equation}\label{zgrowth2}
\varphi(t,p)\le z^\epsilon(t,p,y)\le L \sum^n_{i=1}p_i+ \sum^n_{i=1}\sqrt{\epsilon}p_i|y_i-L|, \quad (t,p,y)\in (0,T]\times (0,\infty)^n\times\R^n,
\end{equation}
provided $g$ satisfies \eqref{gEstimate2}. Here $\varphi$ is the ``Black-Scholes" price
\begin{equation}\label{OriginalBS}
\begin{cases}
\varphi_t + \frac{1}{2}\tr\sigma\sigma^t\left(d(p)D^2\varphi d(p)\right)=0, \quad (t,p)\in (0,T)\times (0,\infty)^n\\
 \hspace{1.76in} \varphi = g, \quad (t,p)\in \{T\}\times (0,\infty)^n
\end{cases}.
\end{equation}
(ii) For each $0<\eta< T$, there is a $K=K(\eta)$ such that 
\begin{equation}\label{zgrowth3}
z^\epsilon(t,p,y)\ge \sum^n_{i=1}\sqrt{\epsilon}p_i|y_i| - K T\epsilon, \quad(t,p,y)\in (0,T-\eta]\times (0,\infty)^n\times\R^n
\end{equation}
for all $0<\epsilon<1/4.$
\end{lem}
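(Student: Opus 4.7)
The plan is to construct explicit sub- and supersolutions of \eqref{zEq} and invoke the comparison principle for viscosity solutions, which holds on the unbounded domain $(0,T)\times(0,\infty)^n\times\R^n$ by a standard doubling-of-variables argument that uses the eikonal constraint \eqref{LipschitZeps} to control growth at infinity.

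For the lower bound $\varphi\le z^\epsilon$ in part (i), I will show that the Black--Scholes price $\varphi$ from \eqref{OriginalBS} is itself a viscosity subsolution of \eqref{zEq}. Since $\varphi$ is independent of $y$, each eikonal term $|\varphi_{y_i}|-\sqrt\epsilon p_i=-\sqrt\epsilon p_i$ is non-positive; and using \eqref{OriginalBS}, the parabolic expression in \eqref{zEq} reduces to $-\tfrac{1}{2\epsilon}|\sigma^t d(p)(D_p\varphi-y)|^2\le 0$, because the rank-one matrix $(D_p\varphi-y)\otimes(D_p\varphi-y)$ is PSD and its trace against the PSD matrix $d(p)\sigma\sigma^t d(p)$ is non-negative. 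Comparison with the matching terminal data $\varphi(T,\cdot)=g=z^\epsilon(T,\cdot,\cdot)$ then yields $\varphi\le z^\epsilon$. For the upper bound I take the explicit supersolution $\bar z(t,p,y)=L+\sum_i\sqrt\epsilon p_i|y_i|$ under \eqref{gEstimate}, or $\bar z(t,p,y)=L\sum_j p_j+\sum_i\sqrt\epsilon p_i|y_i|$ under \eqref{gEstimate2}. Away from $\{y_i=0\}$, $|\bar z_{y_i}|=\sqrt\epsilon p_i$ saturates the eikonal, so the max in \eqref{zEq} is non-negative. At a corner $y_0$ with some $y_{0,i}=0$, a smooth $\phi\le\bar z$ touching from below must satisfy $|\phi_{y_j}|=\sqrt\epsilon p_{0,j}$ in the non-corner directions; at the pure corner $y_0=0$, the flatness of $\bar z$ in $p$ forces $\phi_t=0$, $D_p\phi=0$, and $D_p^2\phi\le 0$, rendering the diffusive term non-negative. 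The terminal inequality $\bar z(T)\ge g$ follows from \eqref{gEstimate}/\eqref{gEstimate2}; the elementary bound $|y_i|\le|y_i-C|+C$ with $C=L$ absorbs the additional $\sqrt\epsilon p_iC$ into the constants to give \eqref{zgrowth1}--\eqref{zgrowth2}.

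For part (ii), I aim to construct a subsolution $\underline z$ of \eqref{zEq} with $\underline z(T)\le 0\le g$ that dominates $\sum\sqrt\epsilon p_i|y_i|-KT\epsilon$ on $t\le T-\eta$. The natural candidate $\Phi(t,p,y)=\sum\sqrt\epsilon p_i|y_i|-KT\epsilon$ is in fact a viscosity subsolution of \eqref{zEq} (away from $\{y_i=0\}$ both terms in the max are $\le 0$; at $y=0$ the subsolution condition is vacuous, as any smooth $\phi\ge\Phi$ touching from above would need to satisfy $\phi_{y_i}(0)=\pm\sqrt\epsilon p_i$ simultaneously, impossible for $p_i>0$), but $\Phi(T,\cdot,\cdot)$ exceeds $g$ for large $|y|$. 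My proposal is the truncation
\[
\underline z(t,p,y)=\sqrt\epsilon\bigl(\textstyle\sum_i p_i|y_i|-\tilde h(T-t)\bigr)^+-K\epsilon
\]
with $\tilde h(s)=C_0\sqrt\epsilon/s$, which gives $\underline z(T)=-K\epsilon\le 0$ and, for $t\le T-\eta$, $\underline z\ge\sqrt\epsilon\sum p_i|y_i|-(C_0/\eta+K)\epsilon$, yielding \eqref{zgrowth3} after reindexing $K(\eta)$. The inner region $\{\sum p_i|y_i|<\tilde h(T-t)\}$ is trivially a subsolution since $\underline z\equiv -K\epsilon$ is constant; in the outer region, $-\underline z_t=-\sqrt\epsilon\,\tilde h'(T-t)=O(\epsilon/(T-t)^2)$ must be dominated by the diffusive term $\tfrac{1}{2\epsilon}|\sigma^t d(p)(\sqrt\epsilon|y_i|-y_i)_i|^2$, which on the free boundary $\sum p_i|y_i|=\tilde h(T-t)$ is of order $(1-\sqrt\epsilon)^2\tilde h(T-t)^2/(n\epsilon)=O(1/(T-t)^2)$ by Cauchy--Schwarz. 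The main obstacle is choosing $C_0$ (in terms of $\sigma$, $\sigma^{-1}$, $n$) large enough so that this domination holds uniformly in $(t,p,y)$, and then handling the free boundary $\{\sum p_i|y_i|=\tilde h(T-t)\}$ in the viscosity sense via test functions; the hypothesis $\epsilon<1/4$ simplifies the estimates by ensuring $(1-\sqrt\epsilon)^2\ge 1/4$.
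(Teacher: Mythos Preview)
Your lower bound $\varphi\le z^\epsilon$ in part (i) is correct and is essentially the paper's argument, just phrased at the $z^\epsilon$ level rather than the $v^\epsilon$ level.

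Your upper bound has a genuine gap under hypothesis \eqref{gEstimate2}. Test your proposed barrier $\bar z(t,p,y)=L\sum_j p_j+\sum_i\sqrt\epsilon\,p_i|y_i|$ at the pure corner $(t_0,p_0,0)$ with the smooth function $\phi(t,p,y)=L\sum_j p_j$. Then $\phi\le\bar z$ with equality at $(t_0,p_0,0)$, and $\phi_t=0$, $D_p\phi=L\mathbf 1$, $D^2_p\phi=0$, $D_y\phi=0$. Each eikonal term equals $-\sqrt\epsilon\,p_{0,i}<0$, while the parabolic term equals $-\tfrac{1}{2\epsilon}\bigl|\sigma^t d(p_0)(L\mathbf 1-0)\bigr|^2<0$; the maximum is strictly negative and $\bar z$ is \emph{not} a viscosity supersolution. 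The sentence ``the flatness of $\bar z$ in $p$ forces $D_p\phi=0$'' is simply false here: $\bar z(t,p,0)=L\sum_j p_j$ is linear in $p$, not constant. The fix is to place the corner at $y_0=(L,\dots,L)$ from the start, i.e.\ use $\bar z=L\sum_j p_j+\sum_i\sqrt\epsilon\,p_i|y_i-L|$; then at the corner $D_p\phi-y_0=L\mathbf 1-L\mathbf 1=0$ and the diffusive term is nonnegative. This is exactly the form \eqref{zgrowth2}, so no post-hoc triangle-inequality conversion is needed. The paper sidesteps the corner verification altogether: it freezes $y=y_0:=(L,\dots,L)$, compares $v^\epsilon(\cdot,\cdot,y_0)$ with a constant in the \emph{linear} parabolic equation coming from \eqref{veq} to get $z^\epsilon(t,p,y_0)\le L\sum_j p_j$, and then invokes the already-known Lipschitz bound $|z^\epsilon_{y_i}|\le\sqrt\epsilon\,p_i$ to reach all $y$.

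For part (ii) your truncated barrier with threshold $\tilde h(s)=C_0\sqrt\epsilon/s$ can be pushed through, but it carries a free boundary and a singular terminal limit that must be handled in the viscosity sense. The paper's construction is much simpler: pick a smooth nondecreasing cutoff $\chi$ with $\chi(0)=0$ and $\chi\equiv1$ on $[\eta,\infty)$, and set
\[
\psi(t,p,y)=\chi(T-t)\sum_i\sqrt\epsilon\,p_i|y_i|-K\epsilon(T-t).
\]
Then $\psi(T,\cdot)=0\le g$, the eikonal reads $|\psi_{y_i}|=\chi\sqrt\epsilon\,p_i\le\sqrt\epsilon\,p_i$, and the parabolic term is handled by absorbing $\chi'\sum_i\sqrt\epsilon\,p_i|y_i|$ into $\tfrac{1}{2\epsilon}|\sigma^t d(p)(D_p\psi-y)|^2\ge\tfrac{(1-\sqrt\epsilon)^2}{2\epsilon|\sigma^{-1}|^2}\sum_i p_i^2y_i^2$ via $as-bs^2\le a^2/(4b)$, giving a $K=K(\eta)$ independent of $\epsilon\in(0,1/4)$. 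No free boundary, no blow-up at $t=T$.
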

We omit a proof as one is readily adapted from Proposition 2.1 and Lemma 2.2 in \cite{BS}. \\\\
{\bf Acknowledgements}: We thank Craig Evans and Yifeng Yu for their encouragement.

\appendix

\end{document}